\documentclass{amsart}

\newtheorem{theorem}{Theorem}[section]
\newtheorem{lemma}[theorem]{Lemma}
\newtheorem{proposition}[theorem]{Proposition}
\newtheorem{corollary}[theorem]{Corollary}

\theoremstyle{definition}
\newtheorem{definition}[theorem]{Definition}

\theoremstyle{remark}
\newtheorem{remark}[theorem]{Remark}

\numberwithin{equation}{section}

\usepackage{amsfonts,amsmath,amssymb}
\usepackage{mathrsfs,mathtools,stmaryrd,wasysym}
\usepackage{enumerate}
\usepackage{xspace} 
\usepackage{esint}
\usepackage{hyperref}
\DeclareMathAlphabet{\mathpzc}{OT1}{pzc}{m}{it}

\newcommand{\R}{\mathbb{R}}

\newcommand{\calK}{\mathcal{K}}

\newcommand{\bv}{\mathbf{v}}

\newcommand{\GRAD}{\nabla}
\DeclareMathOperator{\DIV}{div}
\newcommand{\diff}{\, \mbox{\rm d}}
\newcommand{\ie}{i.e.,\@\xspace}

\DeclareMathOperator{\diam}{diam}
\newcommand{\calZ}{\mathcal{Z}}
\newcommand{\bF}{{\mathbf{F}}}
\newcommand{\bL}{{\mathbf{L}}}
\newcommand{\bW}{{\mathbf{W}}}
\newcommand{\bH}{{\mathbf{H}}}
\newcommand{\bG}{{\mathbf{G}}}
\newcommand{\bE}{{\mathbf{E}}}
\newcommand{\bef}{{\mathbf{f}}}
\newcommand{\bX}{{\mathbf{X}}}
\newcommand{\bvarphi}{{\boldsymbol{\varphi}}}

\newcommand{\btau}{{\boldsymbol{\tau}}}
\newcommand{\bg}{{\mathbf{g}}}
\newcommand{\bw}{{\mathbf{w}}}
\newcommand{\T}{\mathscr{T}_h}
\newcommand{\V}{\mathbf{V}}
\renewcommand{\P}{\mathcal{P}}
\newcommand{\vare}{\varepsilon}

\newcommand{\Tr}{\mathbb{T}}
\newcommand{\pe}{{\mathsf{\pi}}}
\newcommand{\ue}{{\mathsf{u}}}

\DeclareMathOperator{\signum}{sign}

\newcommand{\dist}[1]{ {\textup{\textsf{d}}}_{#1} }

\newcommand{\Edges}{\mathscr{E}}
\newcommand{\Vertices}{\mathscr{V}}
\newcommand{\Midpoints}{\mathscr{M}}
\newcommand{\textv}{{\texttt{v}}}
\newcommand{\texte}{{\texttt{e}}}
\newcommand{\textm}{{\texttt{m}}}

\begin{document}

\title[Stability of the Stokes projection on weighted spaces]{Stability of the Stokes projection on weighted spaces and applications}
\author{Ricardo G.~Dur\'an}
\address{IMAS (UBA-CONICET) and Departamento de Matem\'atica, Facultad de Ciencias Exactas y Naturales, Universidad de Buenos Aires, Buenos Aires, Argentina.}
\curraddr{}
\email{rduran@dm.uba.ar}
\thanks{The first author was partially supported by ANPCyT grant PICT 2014-1771, by CONICET grant
11220130100006CO, and by Universidad de Buenos Aires grant 20020120100050BA}

%    author two information
\author{Enrique Ot\'arola}
\address{Departamento de Matem\'atica, Universidad T\'ecnica Federico Santa Mar\'ia, Valpara\'iso, Chile.}
\curraddr{}
\email{enrique.otarola@usm.cl}
\thanks{The second author was partially supported by FONDECYT grant 11180193.}

%    author three information
\author{Abner J.~Salgado}
\address{epartment of Mathematics, University of Tennessee, Knoxville, TN 37996, USA.}
\curraddr{}
\email{asalgad1@utk.edu}
\thanks{The third author was partially supported by NSF grant DMS-1720213.}

%    \subjclass is required.
\subjclass[2010]{Primary 35Q35,         % PDEs in connection with fluid mechanics
35Q30,         % Navier-Stokes equations 
35R06,          %Partial differential equations with measure
65N15,          % Error bounds
65N30,          % Finite elements, Rayleigh-Ritz and Galerkin methods, finite methods
76Dxx.}
\date{}
\dedicatory{}

%    Abstract is required.
\begin{abstract}
We show that, on convex polytopes and two or three dimensions, the finite element Stokes projection is stable on weighted spaces $\bW^{1,p}_0(\omega,\Omega) \times L^p(\omega,\Omega)$, where the weight belongs to a certain Muckenhoupt class and the integrability index can be different from two. We show how this estimate can be applied to obtain error estimates for approximations of the solution to the Stokes problem with singular sources.
\end{abstract}

\maketitle

\section{Introduction}
\label{sec:intro}

In this work we shall be interested in the stability and approximation properties of the finite element Stokes projection when measured over weighted norms. To be precise, let $d \in \{2,3\}$ and $\Omega \subset \R^d$ be a convex polytope. Assume that $\Tr = \{\T\}_{h>0}$ is a family of quasiuniform triangulations of $\bar \Omega$ parametrized by their mesh size $h>0$ and $\V_h \times \P_h$ is a pair of finite element spaces constructed over the mesh $\T$. To describe the question that we wish to address here let $(\ue,\pe) \in \bW^{1,1}_0(\Omega) \times L^1(\Omega)/\R$, with $\ue$ solenoidal (see Section~\ref{sec:prelim} for notation), and define $(\ue_h,\pe_h) \in \V_h \times \P_h$ to be its Stokes projection, \ie the pair $(\ue_h,\pe_h)$ is such that
\begin{equation}
\label{eq:Stokesproj}
  \begin{dcases}
    \int_\Omega \left[ \nabla \ue_h : \nabla \bv_h - \pe_h \DIV \bv_h \right] \! \diff x= \int_\Omega \left[ \nabla \ue : \nabla \bv_h - \pe \DIV \bv_h \right] \! \diff x &  \! \forall \bv_h \in \V_h, 
\\
    \int_\Omega q_h \DIV \ue_h \diff x = 0 & \! \forall q_h \in \P_h.
  \end{dcases}
\end{equation}
With this notation, the main result in our work is that, for a certain range of integrability indices $p$ and a certain class of Muckenhoupt weights $\omega$, we have
\begin{equation}
\label{eq:WeightedStability}
  \| \nabla \ue_h \|_{\bL^p(\omega,\Omega)}  +  \| \pe_h \|_{L^p(\omega,\Omega)} \lesssim \| \nabla \ue \|_{\bL^p(\omega,\Omega)}  +  \| \pe \|_{L^p(\omega,\Omega)}.
\end{equation}

Our main motivation for the development of such estimates is the study of the Stokes problem
\begin{equation}
\label{eq:StokesStrong}
  \begin{dcases}
      -\Delta \ue + \GRAD \pe = \bef, & \text{in } \Omega, \\
      \DIV \ue = 0, & \text{in } \Omega, \\
      \ue = 0, & \text{on } \partial\Omega,
  \end{dcases}
\end{equation}
in the case where the forcing term $\bef$ is allowed to be singular. Essentially, by introducing a weight, we can allow for forces such that $\bef \notin \bW^{-1,2}(\Omega)$. In particular, our theory will allow the following particular examples. For a fixed $\bF \in \R^d$ we can set $\bef = \bF \delta_z$, where $\delta_z$ denotes the Dirac delta supported at the interior point $z \in \Omega$. Similarly, if $\Gamma$ denotes a smooth curve or surface without boundary contained in $\Omega$, we can allow the components of $\bef$ to be measures supported in $\Gamma$.

While the stability and approximation properties for the Stokes problem in energy type norms has a rich history and is by now well established, the derivation of these properties in non energy norms is more delicate. To our knowledge, the first works that address these questions in a non energy setting are \cite{MR935076,MR995211}. In these references, the authors establish a $L^\infty$-norm almost stability (up to logarithmic factors) in two dimensions. Later, in view of the weighted a priori estimate for a solution of the divergence operator of \cite{MR1880723}, the results of \cite{MR995211} were extended to three dimensions; see \cite[Section 3]{MR1880723} for a discussion. We would also like to mention reference \cite{MR2217368} for results on domains with smooth boundaries. Results withouth logarithmic factors 
%and for polytopal domains 
where first established in \cite{MR2121575}, albeit under certain restrictions on the internal angles of the domain. This last assumption was finally removed in \cite{MR3422453}. The state of the art is that, simply put, the Stokes projection is stable in $\bW^{1,p}_0(\Omega) \times L^p(\Omega)/\R$ for $p \in (1,\infty]$ if the domain $\Omega$ is a convex polytope.

We must remark that, in the PDE literature, the idea of introducing weights to handle singular sources is by now well established. There is a vast amount of literature dealing with weighted a priori estimates for solutions of elliptic equations and systems, 
%\AJSc{Cite a bunch of stuff here}, 
and for models of incompressible fluids that are even more general than \eqref{eq:StokesStrong}; see for instance \cite{MR3582412}. However, in most of these works, it is usually assumed that the domain is at least $C^1$, which is not finite element friendly. Two exceptions are \cite{DDO:17,OS:17infsup}. In \cite{DDO:17} the well posedness of the Poisson problem in $W^{1,p}_0(\omega,\Omega)$ is established for all $p \in (1,\infty)$ and $\omega \in A_p$, provided $\Omega$ is a convex polytope. In addition, the stability of the Ritz projection is obtained for $p \in [2,\infty)$ and $\omega \in A_1$, and for $p =2$ and $\omega^{-1} \in A_1$. On the other hand, \cite{OS:17infsup} works on general Lipschitz domains, and shows that the Poisson and Stokes problems are well posed, provided $p$, that depends on the domain, is restricted to a neighborhood of $2$ and the weight is regular near the boundary ($\omega \in A_p(\Omega)$ in the notation of that work).

From the discussion given above, it is clear that the stability of the Stokes projection is open and, in light of applications, needed. This is the main contribution of our work.

Our presentation will be organized as follows. We set notation in Section \ref{sec:prelim}, where we also recall the definition of Muckenhoupt weights and introduce the weighted spaces we shall work with. In addition, in Section \ref{sec:Stokes}, we introduce a saddle point formulation of the Stokes problem \eqref{eq:StokesStrong} in weighted spaces and review well-posedness results. In Section \ref{sec:finite_element_approx} we introduce the discrete setting in which we will operate. Section \ref{sec:discrete_stability} is dedicated to obtaining the stability of the finite element Stokes projection in weighted spaces; this is one of the highlights of our work. As an immediate application, Section \ref{sec:error_estimates} studies the development of $\bL^p$--error estimates for the error approximation of the velocity field. We also specialize these results and study the approximation of the Stokes problem with a forcing term that is a linear combination of Dirac measures.
% supported on a lower dimensional object. 
All the developments of the previous sections rest on a series of assumptions on the finite element velocity--pressure pairs. For this reason in, the final, Section \ref{sec:Pairs} we derive a continuous weighted inf--sup condition and study some suitable finite element pairs that satisfy all the assumptions that our theory rests upon.

\section{Notation and preliminaries}
\label{sec:prelim}

We begin by fixing notation and the setting in which we will operate. Throughout this work $d \in \{2,3\}$ and $\Omega \subset \R^d$ is an open, bounded, and convex polytope. If $\mathcal{W}$ and $\mathcal{Z}$ are Banach function spaces, we write $\mathcal{W} \hookrightarrow \mathcal{Z}$ to denote that $\mathcal{W}$ is continuously embedded in $\mathcal{Z}$. We denote by $\mathcal{W}'$ and $\|\cdot\|_{\mathcal{W}}$ the dual and the norm of $\mathcal{W}$, respectively.

For $E \subset \Omega$ open and $f : E \to \R$, we set
\[
 \fint_E f \diff x  = \frac{1}{|E|}\int_{E} f \diff x.
\]
For $w \in L^1_{\mathrm{loc}}(\Omega)$, the Hardy--Littlewood maximal operator is defined by
\begin{equation}
\label{eq:Maximal}
 \mathcal{M} w(x) = \sup_{Q \ni x} \fint_{Q} |w(y)| \diff y,
\end{equation}
where the supremum is taken over all cubes $Q$ containing $x$.

Given $p \in (1,\infty)$, we denote by $p'$ its H\"older conjugate, \ie the real number such that $1/p + 1/p' = 1$. By $a \lesssim b$ we will denote that $a \leq C b$, for a constant $C$ that does not depend on $a$, $b$ nor the discretization parameters. The value of $C$ might change at each occurrence.

\subsection{Weights and weighted Sobolev spaces}
\label{sub:weights}

By a weight we mean a locally integrable, nonnegative function defined on $\R^d$. If $\omega$ is a weight and $E \subset \R^d$ we set
\[
  \omega(E) = \int_E \omega \diff x.
\]
Of particular interest to us will be the so--called Muckenhoupt $A_p$ weights \cite{MR1800316,MR0293384,MR1774162}. 

\begin{definition}[Muckenhoupt class $A_p$]
Let $p \in [1,\infty)$ we say that a weight $\omega \in A_p$ if
\begin{equation}
\begin{aligned}
\label{A_pclass}
\left[ \omega \right]_{A_p} & := \sup_{B} \left( \fint_{B} \omega \diff x\right) \left( \fint_{B} \omega^{1/(1-p)} \diff x \right)^{p-1}  < \infty, \quad p \in (1,\infty),
\\
\left[ \omega \right]_{A_1} & := \sup_{B} \left( \fint_{B} \omega \diff x\right)  \sup_{x \in B} \frac{1}{\omega(x)}< \infty, \quad p =1,
\end{aligned}
\end{equation}
where the supremum is taken over all balls $B$ in $\R^d$. In addition, $A_{\infty} := \bigcup_{p > 1} A_p$.
We call $[\omega]_{A_p}$, for $p \in [1,\infty)$, the Muckenhoupt characteristic of $\omega$.
\end{definition}

Notice that there is a certain symmetry in the $A_p$ classes with respect to H\"older conjugate exponents. If $\omega \in A_p$, then its conjugate $\omega':= \omega^{1/(1-p)} \in A_{p'}$ and 
\[
[ \omega' ]_{A_{p'}} = [ \omega ]^{1/(p-1)}_{A_p}.
\]
We comment also that, following \cite[Chapter 7.1]{MR1800316}, an equivalent characterization of $\omega \in A_1$ is that for almost every $x$,
\begin{equation}
\label{eq:A1usingMax}
   \mathcal{M} \omega(x) \lesssim \omega(x).
\end{equation}
The class of $A_p$ weights was introduced by Muckenhoupt in \cite{MR0293384} where he showed that the $A_p$ weights are precisely those weights for which the Hardy-Littlewood maximal operator is bounded on weighted Lebesgue spaces; see \cite{MR0293384} and \cite[Theorem 7.3]{MR1800316}.

Distances to lower dimensional objects are prototypical examples of Muckenhoupt weights. In particular, if $\calK \subset \Omega$ is a smooth compact submanifold of dimension $k \in [0,d) \cap \mathbb{Z}$ then, owing to \cite{MR3215609} and \cite[Lemma 2.3(vi)]{MR1601373}, we have that the function
\begin{equation}
\label{distance_A2}
  \dist{\calK}^\alpha(x) = \textup{dist}(x,\calK)^{\alpha}
\end{equation}
belongs to the class $A_p$ provided
\[
  \alpha \in \left( -(d-k), (d-k)(p-1) \right).
\]
This allows us to identify three particular cases:

\begin{enumerate}[(i)]
\item Let $d>1$ and $z \in \Omega$, then the weight $\dist{z}^\alpha \in A_2$ if and only if $ \alpha \in (- d , d) $.
\label{item1}
  
\item Let $d \geq 2$ and $\gamma \subset \Omega$ be a smooth closed curve without self intersections. We have that $\dist{\gamma}^\alpha \in A_2$ if and only if $\alpha \in \left( -(d-1) , d-1 \right) $.
\label{item2}

\item Finally, if $d = 3$ and $\Gamma \subset \Omega$ is a smooth surface without boundary, then $\dist{\Gamma}^\alpha \in A_2$ if and only if $\alpha \in (-1,1)$.
\label{item3}

\end{enumerate}

It is important to notice, first, that in all the examples shown above we have that either the weight or its inverse, which is the conjugate within the $A_2$ class, belongs to $A_1$. Second, since the lower dimensional objects are strictly contained in $\Omega$, there is a neighborhood of $\partial\Omega$ where the weight has no degeneracies or singularities. In fact, it is continuous and strictly positive. This observation motivates us to define a restricted class of Muckenhoupt weights that will be of importance for the analysis that follows. The next definition is inspired by \cite[Definition 2.5]{MR1601373}.
\begin{definition}[class $A_p(D)$]
\label{def:ApOmega}
Let $D \subset \R^d$ be a Lipschitz domain. For $p \in (1, \infty)$ we say that $\omega \in A_p$ belongs to $A_p(D)$ if there is an open set $\mathcal{G} \subset D$, and positive constants $\varepsilon>0$ and $\omega_l>0$, such that:
\begin{enumerate}[(a)]
\item $\{ x \in \Omega: \mathrm{dist}(x,\partial D)< \varepsilon\} \subset \mathcal{G}$, 
\item $\omega \in C(\bar {\mathcal{G}})$, and  
\item $\omega_l \leq \omega(x)$ for all $x \in \bar{\mathcal{G}}$. 
\end{enumerate}
\end{definition}

Notice that the weights described in \eqref{item1}--\eqref{item3} belong to the restricted Muckenhoupt class $A_2(\Omega)$. 
%\cite[Definition 2.5]{MR1601373}. 
The latter has been shown to be crucial in the analysis of  \cite{OS:17infsup} that guarantees the well--posedness of problem \eqref{eq:StokesStrong} in the weighted Sobolev spaces that we define below.

Let $p \in (1,\infty)$, $\omega \in A_p$, and $E \subset \R^d$ be an open set. We define $L^p(\omega,E)$ as the space of Lebesgue $p$--integrable functions with respect to the measure $\omega\diff x$. We also define the weighted Sobolev space $W^{1,p}(\omega,E)$ as the set of functions $v \in L^p(\omega,E)$ with weak derivatives $D^{\alpha} v \in L^p(\omega,E)$ for $|\alpha| \leq 1$. The norm of a function $v \in W^{1,p}(\omega,E)$ is given by
\begin{equation}
\label{eq:norm}
 \| v \|_{W^{1,p}(\omega,E)}:= \left(  \| v \|_{L^p(\omega,E)}^p +  \| \nabla v \|_{L^p(\omega,E)}^p  \right)^{1/p}.
\end{equation}
We also define $W_0^{1,p}(\omega,E)$ as the closure of $C_0^{\infty}(E)$ in $W^{1,p}(\omega,E)$.  It is remarkable that most of the properties of classical Sobolev spaces have a weighted counterpart. This is not because of the specific form of the weight but rather due to the fact that the weight $\omega$ belongs to the Muckenhoupt class $A_p$. If $p \in (1,\infty)$ and $\omega$ belongs to $A_p$, then $L^p(\omega,E)$ and $W^{1,p}(\omega,E)$ are Banach spaces \cite[Proposition 2.1.2]{MR1774162} and smooth functions are dense \cite[Corollary 2.1.6]{MR1774162}; see also \cite[Theorem 1]{MR2491902}. In addition, \cite[Theorem 1.3]{MR643158} guarantees a weighted Poincar\'e inequality which, in turn, implies that over $W^{1,p}_0(\omega,E)$ the seminorm $\| \nabla v \|_{L^p(\omega,E)}$ is an equivalent norm to the one defined in \eqref{eq:norm}. 

Spaces of vector valued functions will be denoted by boldface, that is 
\[
 \mathbf{W}_0^{1,p}(\omega,E) = [ W_0^{1,p}(\omega,E) ]^d, \quad \| \GRAD \bv \|_{\bL^p(\omega,E)}:= \left( \sum_{i=1}^d \| \nabla v^i \|^p_{\bL^p(\omega,E)} \right)^{1/p},
\]
where $\bv = (v^1,\dots,v^d)^\intercal$. 

For future use we recall a particular Sobolev--type embedding theorem between weighted spaces. For the general case we refer to \cite{MR805809,MR1292572,MR2454455} and \cite[Section 6]{NOS3}.

\begin{proposition}[embedding in weighted spaces]
\label{prop:embedding}
Let $p \in (1,\infty)$ and $\omega \in A_p$. Assume that, for all $x \in \Omega$ and $0<r\leq R$, we have that
\[
  \frac{r^{p+d}}{R^{p+d}} \frac{\omega(B(x,R))}{\omega(B(x,r))} \lesssim 1,
\]
then $W^{1,p}(\omega,\Omega) \hookrightarrow L^p(\Omega)$ and $W^{1,p'}(\Omega) \hookrightarrow L^{p'}(\omega',\Omega)$.
\end{proposition}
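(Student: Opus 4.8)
The plan is to reduce both embeddings to a single two–weight bound for the Riesz potential of order one, $I_1 g(x) = \int_\Omega |x-y|^{-(d-1)} g(y)\diff y$, and to obtain the second statement from the first by duality. First I would record the standard sub-representation formula, valid because $\Omega$ is convex and hence a John domain: for $v\in W^{1,p}(\omega,\Omega)$ one has, up to the mean value $v_\Omega = \fint_\Omega v\diff x$, the pointwise estimate $|v(x)-v_\Omega| \lesssim I_1(|\GRAD v|)(x)$ for a.e. $x\in\Omega$. Since $\Omega$ is bounded and $\omega'\in A_{p'}$ is locally integrable, H\"older's inequality gives $|v_\Omega|\lesssim \|v\|_{L^p(\omega,\Omega)}$, so the first embedding $W^{1,p}(\omega,\Omega)\hookrightarrow L^p(\Omega)$ follows once we prove the two–weight inequality
\[
\|I_1 g\|_{L^p(\Omega)} \lesssim \|g\|_{L^p(\omega,\Omega)}, \qquad g = |\GRAD v|.
\]

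The heart of the argument is this inequality, which I would establish by a dyadic decomposition of the kernel. Fixing $x$ and splitting $\Omega$ into the annuli $A_j = \{ y : 2^{j}\le |x-y| < 2^{j+1}\}$, on each $A_j$ the kernel is comparable to $2^{-j(d-1)}$, and H\"older's inequality with the weight yields $\int_{A_j} g\diff y \le (\int_{B_j} g^p\omega)^{1/p}(\omega'(B_j))^{1/p'}$, where $B_j = B(x,2^{j+1})$. Here I would invoke the $A_p$ condition in the sharp form $(\omega'(B))^{1/p'}\lesssim |B|\,\omega(B)^{-1/p}$, which turns each annular contribution into a multiple of $2^{-j(d-1)}|B_j|\,(\omega(B_j)^{-1}\int_{B_j} g^p\omega)^{1/p}$ and makes the balance quantity $\omega(B(x,r))/r^{p+d}$ the object that governs the sum. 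This is precisely where the hypothesis enters: it controls the growth of $\omega(B(x,r))$ across scales and is what renders the annular series summable into the unweighted measure. The diagonal (small–scale) part is absorbed by the maximal operator $\mathcal{M}_\omega$ associated with the measure $\omega\diff x$, which is bounded on $L^p(\omega,\Omega)$ because $\omega\in A_p\subset A_\infty$ is doubling.

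The second embedding would then come for free by duality. Since $I_1$ is self–adjoint, the inequality $\|I_1 g\|_{L^p(\Omega)}\lesssim\|g\|_{L^p(\omega,\Omega)}$ is equivalent to its adjoint $\|I_1 g\|_{L^{p'}(\omega',\Omega)}\lesssim\|g\|_{L^{p'}(\Omega)}$, where one uses that the conjugate of the trivial weight is itself, $1' = 1$. Combining this with the same sub-representation formula, now for the unweighted space where the mean term is even easier to control, gives $W^{1,p'}(\Omega)\hookrightarrow L^{p'}(\omega',\Omega)$. Thus a single two–weight estimate yields both embeddings.

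I expect the main obstacle to be the two–weight inequality itself, because it is at equal integrability exponents ($q=p$): this is exactly the regime in which a bare Muckenhoupt/$A_p$-type condition is \emph{not} sufficient for boundedness of $I_1$ between the two weighted spaces. The role of the balance hypothesis is to furnish the missing summability across dyadic scales, and the delicate point is to combine it with the $A_p$ dual-ball comparison so as to pass from the weighted source norm to the unweighted target while preserving the geometric decay of the off-diagonal terms, in particular reconciling integration against $\diff x$ with the weighted averages produced by $\mathcal{M}_\omega$. Boundedness of $\Omega$ is used to guarantee that the far part of the decomposition terminates.
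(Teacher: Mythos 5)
A point of comparison first: the paper offers no proof of this proposition at all --- it is quoted from the literature, with pointers to \cite{MR805809,MR1292572,MR2454455} and \cite[Section 6]{NOS3}. Your overall architecture (pointwise sub-representation $|v(x)-v_\Omega|\lesssim I_1(|\GRAD v|)(x)$ on a convex, hence John, domain; reduction to a two-weight norm inequality for $I_1$; the second embedding by duality) is precisely the strategy of those references, and your peripheral steps are sound: the H\"older control of the mean value, and the duality computation identifying the adjoint inequality $\|I_1 h\|_{L^{p'}(\omega',\Omega)}\lesssim\|h\|_{L^{p'}(\Omega)}$, are both correct.

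The genuine gap is that the two-weight inequality $\|I_1 g\|_{L^p(\Omega)}\lesssim\|g\|_{L^p(\omega,\Omega)}$ --- which at this point carries the entire content of the proposition --- is not proved by your sketch, and both mechanisms you invoke fail. Your annulus estimate is correct and yields
\[
I_1 g(x)\lesssim\sum_{j\le J}2^{j}\left(\frac{1}{\omega(B_j)}\int_{B_j}g^p\omega\diff y\right)^{1/p},\qquad B_j=B(x,2^{j+1}),\quad 2^J\approx\diam\Omega,
\]
but: (a) inserting the balance hypothesis term by term replaces $\omega(B_j)^{-1/p}$ by $2^{-j(p+d)/p}R^{(p+d)/p}\omega(B(x,R))^{-1/p}$, so the $j$-th term carries the factor $2^{j}\cdot 2^{-j(p+d)/p}=2^{-jd/p}$, which blows up as $j\to-\infty$; the hypothesis alone does not make the series summable. (b) The proposed absorption of the small scales by $\mathcal{M}_\omega$ would require, after raising to the $p$-th power and integrating, the bound $\int_\Omega\mathcal{M}_\omega(g^p)\diff x\lesssim\int_\Omega g^p\omega\diff x$, i.e.\ boundedness of $\mathcal{M}_\omega$ from $L^1(\omega\diff x)$ to $L^1(\diff x)$. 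The boundedness of $\mathcal{M}_\omega$ on $L^p(\omega,\Omega)$ for $p>1$, which you cite, is of no help because the exponent $p$ already sits inside the maximal operator, and $L^1$-type maximal bounds are false (already for $\omega\equiv1$ one cannot bound $\int_\Omega\mathcal{M}f\diff x$ by $\|f\|_{L^1(\Omega)}$); on top of this there is the mismatch between the outer measure $\diff x$ and the $\omega$-averages produced by $\mathcal{M}_\omega$, which you flag in your closing paragraph but never resolve. What is missing is a genuine two-weight theorem for fractional integrals valid at equal exponents $p=q$: for instance the Sawyer--Wheeden sufficiency theorem with power bumps (Amer.\ J.\ Math., 1992), applied after observing first that balance plus $A_p$ give exactly the two-weight Muckenhoupt condition $r_B\,|B|^{1/p}\,\omega(B)^{-1/p}\approx\bigl(r_B^{p+d}/\omega(B)\bigr)^{1/p}\lesssim1$ on balls centered in $\Omega$ of radius $\lesssim\diam\Omega$, and second that the target weight $1$ and the dual weight $\omega'\in A_{p'}\subset A_\infty$ satisfy reverse H\"older inequalities, so that the unbumped condition self-improves to the bumped one; alternatively, one can quote the two-weight Poincar\'e inequality of \cite{MR805809}, whose hypothesis is verbatim the balance condition of the proposition. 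Either ingredient is exactly what the paper's citations supply; neither can be recovered from the annulus-plus-maximal-function argument you outline, so as it stands your proposal reduces the proposition to an unproved, and genuinely difficult, lemma.
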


\subsection{The Stokes problem in weighted spaces}
\label{sec:Stokes}

We begin with a motivation for the use of weights. Let us assume that \eqref{eq:StokesStrong} is posed over the whole space $\R^d$ and that $\bef = \bF \delta_z$ for some $z \in \R^d$. The results of \cite[Section IV.2]{MR2808162} thus provide the following asymptotic behavior of the solution $(\ue, \pe)$ to problem \eqref{eq:StokesStrong} near the point $z$:
\begin{equation}\label{asympt-x0}
| \nabla \ue (x) | \approx |x-z|^{1-d} \quad \textrm{and} \quad | \pe(x) | \approx |x-z|^{1-d},
\end{equation}
so that $|\nabla \ue |, \pe \notin L^2(\R^d)$. However, basic computations reveal that, for every ball $B$,
\[
\alpha \in (d-2,\infty) \implies
\int_B \dist{z}^{\alpha} | \nabla \ue |^2 \diff x< \infty, 
\quad
\int_B \dist{z}^{\alpha} | \pe |^2 \diff x< \infty.
\]
This heuristic suggests to seek solutions to problem \eqref{eq:StokesStrong} in weighted Sobolev spaces \cite{MR3582412,OS:17infsup}. In what follows we will make these considerations rigorous.
 
Let $\omega \in A_p$. Given $\bef \in \bW^{-1,p}(\omega,\Omega)$, we seek for $(\ue,\pe) \in \bW^{1,p}_0(\omega,\Omega) \times L^p(\omega,\Omega)/\R$ such that
\begin{equation}
\label{eq:StokesWeak}
  \begin{dcases}
  a(\ue,\bv) + b(\bv,\pe) =  \langle \bef, \bv \rangle  & \forall \bv \in \bW^{1,p'}_0(\omega',\Omega), \\
  b(\ue,q) = 0 &\forall q \in L^{p'}(\omega',\Omega)/\R,
  \end{dcases}
\end{equation}
where $\langle \cdot, \cdot \rangle$ denotes the duality pairing between $\bW^{-1,p}(\omega,\Omega):=\bW^{1,p'}_0(\omega',\Omega)'$ and $\bW^{1,p'}_0(\omega',\Omega)$. Finally, to shorten notation, here and in what follows, we set
\[
  a(\bv,\bw) = \int_\Omega \GRAD \bv : \GRAD \bw \diff x, \qquad b(\bv,q) = -\int_\Omega q \DIV \bv \diff x.
\]

The well--posedness of \eqref{eq:StokesWeak} in Lipschitz domains was studied in \cite[Theorem 17]{OS:17infsup}. The main result is summarized below.

\begin{proposition}[well--posedness in weighted spaces]
\label{prop:OSwellposed}
Let $d \in \{2,3\}$ and $\Omega \subset \R^d$ be a Lipschitz domain. There exists $\epsilon = \epsilon (d,\Omega) \in (0,1]$ such that if $P = 2+\epsilon$, $p \in (P',P)$, and $\omega\in A_p(\Omega)$, problem \eqref{eq:StokesWeak} is well posed. In other words, for all $\bef \in \bW^{-1,p}(\omega,\Omega)$ problem \eqref{eq:StokesWeak} has a unique solution $(\ue,\pe) \in \bW^{1,p}_0(\omega,\Omega) \times L^p(\omega,\Omega)/\R$ and the following stability estimate holds
\begin{equation}
\label{eq:aprioristokes}
  \| \GRAD \ue \|_{\bL^p(\omega,\Omega)} + \| \pe \|_{L^p(\omega,\Omega)} \lesssim \| \bef \|_{\bW^{-1,p}(\omega,\Omega)}.
\end{equation}
\end{proposition}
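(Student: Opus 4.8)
\emph{The plan} is to establish well--posedness through the generalized, Banach--space version of the Babu\v{s}ka--Brezzi theory, exploiting the duality symmetry recorded in Section~\ref{sub:weights}: since $\omega \in A_p$ we have $\omega' = \omega^{1/(1-p)} \in A_{p'}$, and the regularity of $\omega$ near $\partial\Omega$ built into the class $A_p(\Omega)$ (Definition~\ref{def:ApOmega}) transfers to $\omega'$, so that $\omega' \in A_{p'}(\Omega)$ as well. Writing $\{ \bv \in \bW^{1,p}_0(\omega,\Omega) : \DIV \bv = 0 \}$ for the divergence--free subspace, problem \eqref{eq:StokesWeak} is well posed as soon as two conditions hold: (i) an inf--sup condition for $b$ between $\bW^{1,p}_0(\omega,\Omega)$ and $L^{p'}(\omega',\Omega)/\R$, together with its transpose; and (ii) that $a$ induces an isomorphism between the divergence--free subspace of $\bW^{1,p}_0(\omega,\Omega)$ and the dual of the divergence--free subspace of $\bW^{1,p'}_0(\omega',\Omega)$. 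Because $(p,\omega)$ and $(p',\omega')$ play symmetric roles and the relevant spaces are reflexive, the transpose statements follow by applying the direct ones to the conjugate pair; it therefore suffices to analyze (i) and (ii).

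For condition (i) I would construct a right inverse of the divergence that is bounded on the weighted spaces, \ie an operator $\mathcal{B} \colon L^p(\omega,\Omega)/\R \to \bW^{1,p}_0(\omega,\Omega)$ with $\DIV \mathcal{B} q = q$ and $\|\GRAD \mathcal{B} q\|_{\bL^p(\omega,\Omega)} \lesssim \|q\|_{L^p(\omega,\Omega)}$. On a Lipschitz domain such a right inverse is furnished by the Bogovskii construction, and its boundedness on weighted Lebesgue spaces for $\omega \in A_p$ is precisely the content of the weighted a priori estimate for the divergence operator in \cite{MR1880723}; the lower bound $\omega_l \le \omega$ near $\partial\Omega$ guaranteed by $\omega \in A_p(\Omega)$ is what controls the boundary contribution of this operator. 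Dualizing, using $\omega' \in A_{p'}(\Omega)$, yields the transpose inf--sup, so that condition (i) holds on the full range $p \in (1,\infty)$.

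For condition (ii), since $a(\bv,\bw) = \int_\Omega \GRAD\bv : \GRAD\bw \diff x$ is the Dirichlet form, the required isomorphism reduces to the weighted $\bW^{1,p}_0$--solvability of the vector Dirichlet Laplacian together with boundedness of the associated weighted Helmholtz--Leray projection onto the divergence--free subspace. On a general Lipschitz domain the \emph{unweighted} $\bW^{1,p}_0$--solvability of $-\Delta$ holds only for $p$ in an interval around $2$, and this is the origin of the restriction $p \in (P',P)$ with $P = 2 + \epsilon(d,\Omega)$. To pass to weights I would localize by a partition of unity: away from $\partial\Omega$ the weight is continuous and bounded below, so the interior pieces are governed by classical Calder\'on--Zygmund operators of whole--space type, which are bounded on $L^p(w)$ for every $w \in A_p$ by the Coifman--Fefferman and Rubio de Francia theory; the boundary--layer pieces see a weight comparable to a constant and are therefore controlled by the unweighted Lipschitz--domain estimate on the restricted range. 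The commutators generated by the cutoffs are of lower order and are absorbed through a continuity argument.

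The hard part will be exactly this weighted solvability of the Laplacian on a Lipschitz domain within the sharp range $(P',P)$: one must reconcile the narrow, geometry--dictated range of $p$ forced by the boundary with the full $A_p$ range available in the interior, arranging the localization so that neither the interior degeneracy or singularity of the weight (which, by $\omega \in A_p(\Omega)$, is confined away from $\partial\Omega$) nor the irregularity of the boundary spoils the estimate. Once (i) and (ii) are established, existence, uniqueness, and the stability bound \eqref{eq:aprioristokes} follow from the abstract theory, with the pressure $\pe$ recovered from the inf--sup of $b$ applied to the functional $\langle \bef, \cdot\rangle - a(\ue,\cdot)$, which annihilates the divergence--free subspace and is thus in the range of the transpose of $b$.
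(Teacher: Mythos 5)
The paper itself does not prove this proposition: it is imported verbatim from \cite[Theorem 17]{OS:17infsup} (which covers $p \geq 2$), and the companion remark extends it to $p \in (P',2)$ via the inf--sup characterization of well--posedness in \cite[Theorem 2.1]{MR972452}. Your duality observation — that $(p,\omega)$ and $(p',\omega')$ play symmetric roles, and that $\omega \in A_p(\Omega)$ implies $\omega' \in A_{p'}(\Omega)$ (true, since $\omega$ is continuous and bounded above and below on the compact set $\bar{\mathcal{G}}$ of Definition~\ref{def:ApOmega}) — reproduces that remark. Your condition (i) is also sound: a weighted right inverse of the divergence exists for \emph{every} $p \in (1,\infty)$ and $\omega \in A_p$, and this is exactly how the paper proves its continuous weighted inf--sup condition \eqref{eq:InfSupContinuous}. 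Note, however, that this step needs only $\omega \in A_p$; your assertion that the lower bound $\omega_l \leq \omega$ near $\partial\Omega$ "controls the boundary contribution" of the Bogovskii operator is a misattribution — the restricted class $A_p(\Omega)$ is needed in the elliptic part of the argument, not here.

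The genuine gap is in condition (ii), which is where all the analytic content of \cite[Theorem 17]{OS:17infsup} lives. First, your reduction of (ii) to "weighted solvability of the vector Dirichlet Laplacian together with boundedness of the weighted Helmholtz--Leray projection" is circular: the projection onto divergence--free fields that decouples the form $a$ is precisely the velocity part of the Stokes solution operator whose boundedness is the statement being proven. The projection one can actually build from condition (i) — namely $\bv \mapsto \bv - \mathcal{B}(\DIV \bv)$ with $\mathcal{B}$ the Bogovskii operator — is bounded, but it does \emph{not} decouple $a$: for $\bu$ divergence--free and $\bw$ in the complementary space one has $a(\bu,\bw) \neq 0$ in general, so the kernel inf--sup condition cannot be obtained from a full--space inf--sup for the Laplacian by projecting the test function; this is exactly why the Stokes system on Lipschitz domains is not a corollary of the Poisson problem and requires boundary results of Mitrea--Wright type \cite{MR2987056} as input. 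Second, even the ingredient you would feed into that reduction — weighted $W^{1,p}_0$ solvability of the Dirichlet Laplacian on a Lipschitz domain for $p \in (P',P)$ and $\omega \in A_p(\Omega)$ — is left as a sketch that you yourself flag as "the hard part": the partition--of--unity argument, with commutators "absorbed through a continuity argument," is not carried out, and splicing the interior weighted Calder\'on--Zygmund estimates with the unweighted boundary--layer estimates (which hold in different topologies) is precisely the crux of the cited work. In effect, the proposal assumes the theorem it sets out to prove.
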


\begin{remark}[$p<2$]
Strictly speaking \cite[Theorem 17]{OS:17infsup} only shows well--posedness for $p \geq 2$. However, using the equivalent characterization of well--posedness via inf--sup conditions given in \cite[Theorem 2.1]{MR972452}, see also \cite[Exercise 2.14]{MR2050138}, one can deduce that \eqref{eq:StokesWeak} is also well--posed for $p \in (P',2)$.
\end{remark}

Notice that Proposition~\ref{prop:OSwellposed} assumes only that the domain is Lipschitz. Finer results can be obtained provided more information on the domain is available. Since we are working on convex polytopes we have the following result; see \cite[Corollary 1.8]{MR2987056}.

\begin{proposition}[$L^p$--regularity]
\label{prop:MitreaWright}
Let $d \in \{2,3\}$ and $\Omega \subset \R^d$ be a convex polytope. If $p \in (1,2]$ and $\bef \in \bL^p(\Omega)$, then the solution of \eqref{eq:StokesStrong} is such that
\[
  \ue \in \bW^{2,p}(\Omega) \cap \bW^{1,p}_0(\Omega), \qquad \pe \in W^{1,p}(\Omega)/\R,
\]
with a corresponding estimate.
\end{proposition}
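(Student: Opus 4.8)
The plan is to reduce the regularity of the Stokes system \eqref{eq:StokesStrong} to scalar elliptic regularity on the convex polytope $\Omega$, bootstrapping from the energy case $p=2$. I would first record that, for $p=2$, the asserted $\bW^{2,2}(\Omega)$--regularity of the velocity together with $W^{1,2}(\Omega)$--regularity of the pressure is classical on convex polytopes: on convex polygons it is the result of Kellogg--Osborn, and in three dimensions it follows from the corner and edge analysis of Dauge. The point is that convexity forces every relevant singular exponent to exceed one, so that no singular function obstructs membership in $\bW^{2,2}(\Omega) \times W^{1,2}(\Omega)$. The whole task is therefore to push the integrability index down from $2$ into the open interval $(1,2)$.

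For $p \in (1,2)$ I would decouple the system. Taking the divergence of the momentum equation and using $\DIV \ue = 0$ shows that the pressure satisfies $\Delta \pe = \DIV \bef$ in $\Omega$; complementing this relation with the boundary information inherited from the no--slip condition, I would cast it as a Neumann problem for $\pe$ with data in $\bW^{-1,p}(\Omega)$ plus a boundary term, and invoke $L^p$--regularity for the Laplacian on the convex polytope to obtain $\pe \in W^{1,p}(\Omega)/\R$ with the corresponding bound. Once the pressure gradient is controlled in $\bL^p(\Omega)$, the velocity solves the vector Dirichlet problem $-\Delta \ue = \bef - \GRAD \pe \in \bL^p(\Omega)$ with $\ue = 0$ on $\partial\Omega$, so that the $\bW^{2,p}$--regularity of the Dirichlet Laplacian on $\Omega$ delivers $\ue \in \bW^{2,p}(\Omega) \cap \bW^{1,p}_0(\Omega)$ and closes the estimate.

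The delicate point, and the reason the range is capped at $p=2$, is twofold. First, the $\bW^{2,p}$--regularity of the Dirichlet Laplacian on a convex polytope holds only for $p$ below a threshold dictated by the largest opening angle, and, in three dimensions, by the edge and vertex singular exponents; $p \le 2$ is precisely the range that is safe for every convex polytope, larger values requiring explicit angle restrictions. Second, making the pressure Neumann problem rigorous near edges and vertices---where the outer normal is discontinuous and the boundary term in the reduction is only weakly defined---is genuinely the hard part, and is where a careful trace and interpolation analysis is needed. This is exactly the content handled cleanly by the layer--potential machinery of \cite{MR2987056}, from which the statement is quoted directly as \cite[Corollary 1.8]{MR2987056}; the sketch above only indicates the underlying PDE mechanism.
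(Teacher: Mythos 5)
The paper does not actually prove this proposition: it quotes it directly from \cite[Corollary 1.8]{MR2987056}, which is precisely what your proposal ultimately does, so you take essentially the same approach. Your preliminary sketch (decoupling into a pressure Neumann problem and a vector Dirichlet problem for the Laplacian) is offered only as heuristic motivation and is rightly not relied upon---as stated it is circular, since the Neumann datum for $\pe$ involves the boundary trace of $\Delta\ue$, whose control presupposes the velocity regularity being proved---and the rigorous content rests entirely on the cited result of Mitrea and Wright, just as in the paper.
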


\section{Finite element approximation}
\label{sec:finite_element_approx}

We now introduce the discrete setting in which we will operate. We first introduce some terminology and a few basic ingredients and assumptions that will be common to all our methods.

\subsection{Triangulation and finite element spaces}
\label{sec:fem}
We denote by $\T = \{T\}$ a conforming partition, or mesh, of $\bar\Omega$ into closed simplices $T$ with size $h_T = \diam(T)$ and define $h = \max_{T \in \T} h_T$.  We assume that $\Tr = \{ \T \}_{h>0}$ is a collection of conforming and quasiuniform meshes \cite{CiarletBook,MR2050138}. For $T \in \T$, we define the \emph{star} or \emph{patch} associated with the element $T$ as
\begin{equation}
\label{eq:STstar}
  \mathcal{S} _T := \bigcup \{T' \in \T : T \cap T' \neq \emptyset \}.
\end{equation}

In the literature, several finite element approximations have been proposed and analyzed to approximate the solution to the Stokes problem \eqref{eq:StokesWeak} when the forcing term of the momentum equation is not singular; see, for instance, \cite[Section 4]{MR2050138}, \cite[Chapter II]{MR851383}, and references therein. Initially we shall not be specific about the type of finite element approximation that we are using. We will only state a set of assumptions that our discrete spaces need to satisfy. Given a mesh $\T \in \Tr$, we denote by $\V_h$ 
and $\P_h$ the finite element spaces that approximate the velocity field and the pressure, respectively, constructed over $\T$. We assume that, for every $p \in (1,\infty)$ and $\omega \in A_p$,
\[
  \V_h \subset \bW^{1,\infty}_0(\Omega) \subset \bW^{1,p}_0(\omega,\Omega), \qquad \P_h\subset L^\infty(\Omega)/\R \subset L^p(\omega,\Omega)/\R.
\]
In addition, we require that functions in $\V_h$ and $\P_h$ are locally polynomials of degree at least one and zero, respectively. Moreover, we need to assume that these spaces are compatible, in the sense that they satisfy weighted versions of the classical LBB condition \cite[Proposition 4.13]{MR2050138}. Namely, we assume that if $\omega \in A_p$ then, there exists a positive constant $\beta = \beta([\omega]_{A_p})$ such that, for all $\T \in \Tr$,
\begin{equation}
\label{eq:infsup_discrete}
  \begin{dcases}
    \inf_{ q_h \in \P_h} \sup_{ \bv_h \in \V_h } \frac{b(\bv_h, q_h)}{ \| \nabla \bv_h \|_{\bL^{p'}(\omega',\Omega)}  \| q_h \|_{L^p(\omega,\Omega)}  } \geq \beta, \\
    \inf_{ q_h \in \P_h} \sup_{ \bv_h \in \V_h } \frac{b(\bv_h, q_h)}{ \| \nabla \bv_h \|_{\bL^p(\omega,\Omega)}  \| q_h \|_{L^{p'}(\omega',\Omega)}  } \geq \beta.
  \end{dcases}
\end{equation}

\subsection{A quasi--interpolation operator}
\label{subsec:quasi}

Since our interest is to approximate rough functions the classical Lagrange interpolation operator cannot be applied. Instead, we can consider a variant of the quasi--interpolation operator analyzed in \cite{NOS3}. Its construction is inspired in the ideas developed by Cl\'ement \cite{MR0400739}, Scott and Zhang \cite{MR1011446}, and Dur\'an and Lombardi \cite{MR2164092}: it is built on local averages over stars and is thus well--defined for locally integrable functions; it also exhibits optimal approximation properties.

For $\T \in \Tr$, we let $X_h$ be the space of piecewise linear, continuous, functions over the mesh $\T$. For $w \in L^1(\Omega)$, we define $\Pi_{X_h} w \in X_h$ to be the interpolation operator of \cite{NOS3} onto piecewise linears. Define $\bX_h = [X_h \cap H^1_0(\Omega)]^d$.
% Since by assumption $\bX_h = [X_h \cap H^1_0(\Omega)]^d \subset \V_h$, 
For $\bv \in \bW^{1,1}_0(\Omega)$, we set $\Pi_{\V_h} \bv \in \bX_h \subset \V_h$ to be the operator $\Pi_{X_h}$ applied component--wise and accordingly modified to preserve boundary conditions.

To define an interpolant onto the pressure space $\P_h$ we distinguish two cases. If $\P_h$ contains piecewise constants, then, for $q \in L^1(\Omega)/\R$, we simply define $\Pi_{\P_h} q \in \P_h$ to be the local average of $q$. On the other hand, if $\P_h$ contains piecewise linears $\Pi_{\P_h}q = \Pi_{X_h} q + c_q$, where $c_q \in \R$ is chosen so that $\Pi_{\P_h}q \in \P_h$.
% 
% 
% For $\T \in \Tr$, we let $X_h$ be the space of piecewise linear, continuous functions over the mesh $\T$. For $q \in L^1(\Omega)/\R$, we define $\Pi_{\P_h} q \in X_h/\R \subset \P_h/\R$ to be the zero average version of the interpolation operator of \cite{NOS3} onto piecewise linears. Since we also need to interpolate vector valued functions, we set $X_h^0 = X_h \cap H^1_0(\Omega)$ and $\bX_h = [X_h^0]^d$. For $\bv \in \bW^{1,1}_0(\Omega)$ we set $\Pi_{\V_h} \bv \in \bX_h \subset \V_h$ to be the operator of \cite{NOS3} applied component--wise.

To alleviate notation, if there is no source of confusion, we shall use $\Pi_h$ to denote indistinctely $\Pi_{\V_h}$ or $\Pi_{\P_h}$. The properties of $\Pi_h$ are summarized below. For a proof we refer the reader to \cite[Section 5]{NOS3}. 

\begin{proposition}[stability and interpolation estimates]
Let $p \in (1,\infty)$, $\omega \in A_p$, and $T \in \T$. Then, for every $v \in W^{1,p}(\omega,\mathcal{S}_T)$, we have the local stability bound
\begin{equation}
\label{eq:local_stability}
 \| \GRAD \Pi_h v\|_{\bL^p(\omega,T)} \lesssim \| \GRAD v \|_{\bL^p(\omega,\mathcal{S}_T)}
\end{equation}
and the interpolation error estimate
\begin{equation}
\label{eq:interpolation_estimate_1}
\|  v - \Pi_h v \|_{L^p(\omega,T)} \lesssim h_T \| \GRAD v \|_{\bL^p(\omega,\mathcal{S}_T)}.
\end{equation}
The hidden constants, in \eqref{eq:local_stability} and \eqref{eq:interpolation_estimate_1}, are independent of $v$, $T$, and $h$.
\label{pro:interpolation}
\end{proposition}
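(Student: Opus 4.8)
The plan is to reduce everything to the scalar building block $\Pi_{X_h}$ and to exploit three structural features of the construction of \cite{NOS3}: its \emph{locality} (the restriction $\Pi_h v|_T$ depends only on $v|_{\mathcal{S}_T}$), the fact that its nodal values are \emph{local averages} of $v$, and its \emph{invariance on constants}, $\Pi_h c = c$ for $c \in \R$. For the pressure operator that returns element averages the statement \eqref{eq:local_stability} is vacuous (its gradient vanishes on $T$) and \eqref{eq:interpolation_estimate_1} is the content below with $\mathcal{S}_T$ replaced by $T$; hence it suffices to treat $\Pi_{X_h}$, the boundary modification being handled by the same arguments applied to the face averages that enforce the homogeneous boundary condition.

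First I would establish a purely local bound on the nodal functionals. Writing $\Pi_h v|_T = \sum_i \Pi_h v(x_i)\, \phi_i$ in the Lagrange basis and using that each value $\Pi_h v(x_i)$ is an average of $v$ over a subregion $S_i \subset \mathcal{S}_T$ with $T \subset S_i$, H\"older's inequality with exponents $p,p'$ applied to $|v| = (|v|\omega^{1/p})\,\omega^{-1/p}$ gives
\[
 |\Pi_h v(x_i)|^p \lesssim \frac{1}{|S_i|}\int_{S_i} |v|^p \omega \diff x \left( \fint_{S_i} \omega^{1/(1-p)} \diff x \right)^{p-1}.
\]
Since $\sum_i \phi_i \equiv 1$ and $0 \le \phi_i \le 1$ on $T$, integrating against $\omega$ and using $\omega(T) \le \omega(S_i)$ together with the definition \eqref{A_pclass} of $[\omega]_{A_p}$ yields the local stability $\|\Pi_h v\|_{L^p(\omega,T)} \lesssim [\omega]_{A_p}^{1/p}\|v\|_{L^p(\omega,\mathcal{S}_T)}$, with a constant depending on the weight only through its Muckenhoupt characteristic. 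The gradient bound \eqref{eq:local_stability} then follows from invariance on constants: replacing $v$ by $v-c$ leaves $\nabla \Pi_h v$ unchanged, and $|\nabla \phi_i| \lesssim h_T^{-1}$ on a quasiuniform mesh gives, by the same averaging estimate, $\|\nabla \Pi_h v\|_{L^p(\omega,T)} \lesssim h_T^{-1}[\omega]_{A_p}^{1/p}\|v-c\|_{L^p(\omega,\mathcal{S}_T)}$.

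It remains to choose $c$ and absorb the factor $h_T^{-1}$. Here I would invoke the weighted Poincar\'e inequality of \cite[Theorem 1.3]{MR643158}: selecting $c$ as the weighted mean of $v$ over $\mathcal{S}_T$,
\[
 \inf_{c\in\R}\|v-c\|_{L^p(\omega,\mathcal{S}_T)} \lesssim h_T \|\nabla v\|_{L^p(\omega,\mathcal{S}_T)},
\]
which closes \eqref{eq:local_stability}. The interpolation estimate \eqref{eq:interpolation_estimate_1} is then immediate: writing $v - \Pi_h v = (v-c) - \Pi_h(v-c)$, the triangle inequality, the $L^p(\omega)$--stability just proved, and $\|v-c\|_{L^p(\omega,T)} \le \|v-c\|_{L^p(\omega,\mathcal{S}_T)}$ reduce the left--hand side to $\inf_c \|v-c\|_{L^p(\omega,\mathcal{S}_T)}$, to which the same weighted Poincar\'e inequality applies.

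The main obstacle is precisely this weighted Poincar\'e inequality with a constant that is uniform over the family $\Tr$ and scales as $h_T$. The weight does not behave well under dilation, so one cannot simply rescale to a fixed reference configuration; instead the argument rests on two facts: the characteristic $[\omega]_{A_p}$ is invariant under translations and dilations, and, by quasiuniformity, the stars $\mathcal{S}_T$ form a family of connected, uniformly shape--regular (John) domains of diameter comparable to $h_T$. Transferring the weighted Poincar\'e inequality from balls, where \cite{MR643158} states it, to these star domains, while keeping the dependence on $\omega$ only through $[\omega]_{A_p}$, is the delicate point; it can be obtained by a chaining argument over the simplices composing $\mathcal{S}_T$ or by comparison with a concentric ball, and is the ingredient that renders the hidden constants in \eqref{eq:local_stability} and \eqref{eq:interpolation_estimate_1} independent of $v$, $T$, and $h$.
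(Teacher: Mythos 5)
You should first note that the paper does not prove Proposition~\ref{pro:interpolation} at all: it is quoted from \cite[Section 5]{NOS3}, so the comparison is really with that reference, and your reconstruction follows essentially the same strategy as it does there — bound the nodal functionals by local averages via H\"older's inequality and the $A_p$ condition, exploit reproduction of constants to pass to $v-c$, and close with a weighted Poincar\'e inequality on stars whose constant depends on $\omega$ only through $[\omega]_{A_p}$. Your identification of that Poincar\'e inequality as the crux is accurate: it is precisely what \cite[Section 4]{NOS3} is devoted to, and the route you propose (the ball case of \cite[Theorem 1.3]{MR643158}, which is scale invariant with constants governed by $[\omega]_{A_p}$, transferred to $\mathcal{S}_T$ by chaining over its simplices) is viable; note however that your alternative of ``comparison with a concentric ball'' does not work by itself, since a Poincar\'e inequality does not restrict from a ball to a subdomain. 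Two minor technical remarks: the averaging sets $S_i$ of the construction need not contain $T$, but the strong doubling property of $A_p$ weights yields $\omega(T) \lesssim \omega(S_i)$, which is all your argument uses; and the comparability $\bigl(\fint_{S_i}\omega \diff x\bigr)\bigl(\fint_{S_i}\omega' \diff x\bigr)^{p-1} \lesssim [\omega]_{A_p}$ over non-ball sets again follows from shape regularity and doubling.

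The one step that fails as written is the treatment of boundary elements. When $\mathcal{S}_T \cap \partial\Omega \neq \emptyset$, the operator $\Pi_{\V_h}$ is modified to vanish at boundary nodes, so $\Pi_h c \neq c$ there and the identity $\nabla \Pi_h v = \nabla \Pi_h(v-c)$, on which both \eqref{eq:local_stability} and \eqref{eq:interpolation_estimate_1} rest in your argument, is not available. Indeed, for the modified operator and general $v \in W^{1,p}(\omega,\mathcal{S}_T)$ both estimates are simply false on boundary elements, as $v \equiv 1$ shows. The fix is not ``the same argument applied to face averages'': one must use that $\Pi_{\V_h}$ is only defined (and only applied) on functions with zero trace, i.e.\ $\bv \in \bW^{1,1}_0(\Omega)$, and then take $c = 0$ in your argument, replacing the infimum over constants by a Poincar\'e--Friedrichs-type inequality $\| v \|_{L^p(\omega,\mathcal{S}_T)} \lesssim h_T \| \nabla v \|_{\bL^p(\omega,\mathcal{S}_T)}$, valid in the weighted setting for functions vanishing on $\partial\Omega \cap \partial\mathcal{S}_T$. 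This is how \cite{NOS3} (and, in the unweighted case, \cite{MR1011446}) handle boundary patches, and without this ingredient your proof does not cover them.
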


This operator also enjoys the following approximation property \cite[Section 6]{NOS3}.

\begin{proposition}[interpolation in different metrics]
\label{prop:intH1wtoL2}
Assume that $\omega \in A_p$ is such that Proposition~\ref{prop:embedding} holds. Then, for every $v_p \in W^{1,p}(\omega,\mathcal{S}_T)$, we have that
\[
  \| v_p - \Pi_h v_p \|_{L^p(T)} \lesssim h_T^{1+d/p} \omega(\mathcal{S}_T)^{-1/p} \| \nabla v_p \|_{\bL^p(\omega,\mathcal{S}_T)}.
\]
Similarly, for $v_{p'} \in W^{1,p'}(\mathcal{S}_T)$, we have
\[
  \| v_{p'} - \Pi_h v_{p'} \|_{L^{p'}(\omega', T)} \lesssim h_T^{1-d/p'} \omega'(\mathcal{S}_T)^{1/p'} \| \nabla v_{p'} \|_{\bL^{p'}(\mathcal{S}_T)}.
\]
The hidden constants in the previous inequalities are independent of the functions being iterpolated, the cell $T$, and $h$.
\end{proposition}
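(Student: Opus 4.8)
The plan is to reduce both inequalities to a fixed reference configuration by a scaling argument, in which the weight--mass factors $\omega(\mathcal{S}_T)^{\mp 1/p}$ emerge from a normalization of the weight. Fix $T \in \T$, pick a reference point $x_T \in \mathcal{S}_T$, and introduce the dilation $\hat x = (x-x_T)/h_T$, which maps $\mathcal{S}_T$ onto a patch $\hat{\mathcal{S}}$ of unit diameter and $T$ onto $\hat T$. Set $\hat v(\hat x) = v_p(x)$, $\hat\omega(\hat x) = \omega(x)$, and let $\hat e$ denote the pullback of $e := v_p - \Pi_h v_p$. Since averages over balls are invariant under dilations, and since multiplying a weight by a positive constant leaves the Muckenhoupt characteristic unchanged, the normalized weight $\tilde\omega := \hat\omega / \hat\omega(\hat{\mathcal{S}})$ satisfies $[\tilde\omega]_{A_p} = [\hat\omega]_{A_p} = [\omega]_{A_p}$ and $\tilde\omega(\hat{\mathcal{S}}) = 1$; moreover, the pointwise hypothesis of Proposition~\ref{prop:embedding} is invariant under both a dilation of the variable and a rescaling of the weight, so it is inherited by $\tilde\omega$.

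On the reference patch I would establish the dimensionless bound $\| \hat e \|_{L^p(\hat T)} \lesssim \| \hat\nabla \hat v \|_{\bL^p(\tilde\omega,\hat{\mathcal{S}})}$. To this end, I invoke the embedding of Proposition~\ref{prop:embedding} on the fixed simplex $\hat T$ to get $\| \hat e \|_{L^p(\hat T)} \lesssim \| \hat e \|_{W^{1,p}(\tilde\omega,\hat T)}$, with a constant depending only on $\hat T$, $p$, and $[\omega]_{A_p}$. The two contributions on the right are then controlled by Proposition~\ref{pro:interpolation} applied with the weight $\tilde\omega$ and with $\diam \hat T \approx 1$: the interpolation error bound \eqref{eq:interpolation_estimate_1} gives $\| \hat e \|_{L^p(\tilde\omega,\hat T)} \lesssim \| \hat\nabla \hat v \|_{\bL^p(\tilde\omega,\hat{\mathcal{S}})}$, while the triangle inequality together with the local stability \eqref{eq:local_stability} yields $\| \hat\nabla \hat e \|_{\bL^p(\tilde\omega,\hat T)} \lesssim \| \hat\nabla \hat v \|_{\bL^p(\tilde\omega,\hat{\mathcal{S}})}$.

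It then remains to scale back. From $\tilde\omega = \hat\omega/\hat\omega(\hat{\mathcal{S}})$ one has $\| \hat\nabla \hat v \|_{\bL^p(\tilde\omega,\hat{\mathcal{S}})} = \hat\omega(\hat{\mathcal{S}})^{-1/p} \| \hat\nabla \hat v \|_{\bL^p(\hat\omega,\hat{\mathcal{S}})}$, and the change of variables gives $\hat\omega(\hat{\mathcal{S}}) = h_T^{-d}\omega(\mathcal{S}_T)$, $\| \hat e \|_{L^p(\hat T)} = h_T^{-d/p}\| e \|_{L^p(T)}$, and $\| \hat\nabla \hat v \|_{\bL^p(\hat\omega,\hat{\mathcal{S}})} = h_T^{1-d/p}\| \nabla v_p \|_{\bL^p(\omega,\mathcal{S}_T)}$. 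Collecting the powers of $h_T$ produces exactly the factor $h_T^{1+d/p}\omega(\mathcal{S}_T)^{-1/p}$ and proves the first inequality. The second inequality is entirely analogous: one normalizes $\omega'$ in place of $\omega$, and on the reference patch uses the companion embedding $W^{1,p'}(\Omega) \hookrightarrow L^{p'}(\omega',\Omega)$ of Proposition~\ref{prop:embedding} together with the unweighted instance of Proposition~\ref{pro:interpolation} (that is, \eqref{eq:local_stability}--\eqref{eq:interpolation_estimate_1} with weight $\equiv 1 \in A_{p'}$); the same bookkeeping of the scaling factors then yields $h_T^{1-d/p'}\omega'(\mathcal{S}_T)^{1/p'}$.

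The crux of the argument, and the step requiring the most care, is the uniformity of the hidden constants. Both the embedding constant and the interpolation/stability constants on the reference configuration must depend on the weight only through $[\omega]_{A_p}$ (and on the fixed reference geometry), and never on $h_T$ or on the particular element $T$. This is guaranteed precisely by the scale invariance of the Muckenhoupt characteristic and of the hypothesis of Proposition~\ref{prop:embedding} under the dilation $x \mapsto (x-x_T)/h_T$ and under the normalization $\hat\omega \mapsto \tilde\omega$, combined with the quasiuniformity of $\Tr$, which confines the reference patches $\hat{\mathcal{S}}$ to a compact family of shapes. Once this uniformity is secured, the remaining scaling bookkeeping is routine.
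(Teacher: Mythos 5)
Your scaling skeleton is sound, and the bookkeeping is exactly right: the $A_p$ characteristic and the ball-ratio hypothesis of Proposition~\ref{prop:embedding} are indeed invariant under the dilation and under multiplication of the weight by a positive constant, the quasi-interpolant commutes with the dilation, and collecting the powers of $h_T$ produces precisely $h_T^{1+d/p}\omega(\mathcal{S}_T)^{-1/p}$ and $h_T^{1-d/p'}\omega'(\mathcal{S}_T)^{1/p'}$. Note that the paper itself offers no proof of this proposition: it is quoted from \cite[Section~6]{NOS3}, so the real question is whether your argument supplies what that reference supplies.

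It does not quite, and the gap sits exactly at what you call the crux. Proposition~\ref{prop:embedding} is a purely qualitative statement about the fixed domain $\Omega$: it asserts that an embedding exists, with no stated dependence of its constant on the weight or on the domain. You therefore cannot ``invoke it on the reference simplex $\hat T$'' with a constant depending only on $p$, $[\omega]_{A_p}$, and the reference geometry. Indeed, no constant with that dependence can exist: the inequality $\| v \|_{L^p(\hat T)} \leq C \| v \|_{W^{1,p}(\tilde\omega,\hat T)}$ is inhomogeneous in the weight (replacing $\tilde\omega$ by $c\tilde\omega$ multiplies the right-hand side by $c^{1/p}$), so $C$ must involve a non-scale-invariant quantity such as $\tilde\omega(\hat{\mathcal{S}})$ --- which is exactly why your normalization is indispensable. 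But once the weight is normalized, the uniformity you need, over all weights $\tilde\omega$ with $[\tilde\omega]_{A_p} \leq [\omega]_{A_p}$, ball-ratio constant under control, and $\tilde\omega(\hat{\mathcal{S}})=1$ (these vary with $T$ and $h$, so a fixed-weight statement is useless), is precisely a quantitative two-weight Poincar\'e--Sobolev inequality of the form
\begin{equation*}
  \Bigl( \fint_B |v - \bar v|^p \diff x \Bigr)^{1/p} \lesssim r_B \Bigl( \frac{1}{\omega(B)} \int_B |\nabla v|^p \omega \diff x \Bigr)^{1/p},
\end{equation*}
with constant controlled only by $d$, $p$, $[\omega]_{A_p}$, and the ratio constant. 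This is the content of the results of \cite{MR643158,MR805809,MR1292572,MR2454455} as assembled in \cite[Section~6]{NOS3}; it is not derivable from Proposition~\ref{prop:embedding} as stated, and your justification --- scale invariance of the hypotheses plus quasiuniformity confining the patches to a ``compact family of shapes'' --- does not produce it: invariance of hypotheses says nothing about a constant the cited statement never tracks, and a compactness argument over patch shapes cannot deliver uniformity over the class of admissible weights. In short, your proof is a correct reduction, via scaling and normalization, of the proposition to this uniform two-weight inequality (the steps using Proposition~\ref{pro:interpolation}, i.e.\ \eqref{eq:local_stability} and \eqref{eq:interpolation_estimate_1}, on the rescaled mesh are fine), but that inequality is the heart of the matter and is assumed rather than proved.
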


\subsection{Approximate Green's function}
Let $z \in \Omega$ be such that $z \in \mathring{T}_{z}$ for some $T_{z} \in \T$. Let $\tilde \delta_z$ be a regularized Dirac delta satisfying the following properties:
\begin{enumerate}[1.]
  \item $\tilde \delta_{z} \in C_0^{\infty}(T_{z})$;
  \item $\int_{\Omega} \tilde \delta_{z} \diff x = 1$;
  \item $ \| \tilde \delta_{z}  \|_{L^{\infty}(T_{z})} \lesssim h_{T_z}^{-d}$;
  \item $\int_{\Omega}  \tilde \delta_{z} \bv_h \diff x = \bv_h(z)$ for all $\bv_h \in \V_h$. 
\end{enumerate}

We refer to \cite{MR0337032} and \cite[Exercise 8.1]{MR2373954} for a construction of such a function. Notice that, if $\bv_h = (v_h^1,\dots,v_h^d)^\intercal \in \V_h$ and $j \in \{1,\dots,d\}$, we have
\[
 \partial_{x_i} \bv_h^{j} (z) = \int_{\Omega} \partial_{x_i} \bv_h^j  \tilde \delta_{z} \diff x = -  \int_{\Omega} \bv_h^j   \partial_{x_i} \tilde \delta_{z} \diff x, \quad  i \in \{1,\dots,d\}.
\]

With these ingredients at hand, we define a regularized Green's function $(\bG,Q)$ as the solution to the following problem: Find $(\bG,Q) \in \bH_0^1(\Omega) \times L^2(\Omega)/\R$ such that
 \begin{equation}
\label{eq:StokesWeakGreen}
  \begin{dcases}
  a(\bG,\bv) + b(\bv,Q) =  \int_{\Omega} \tilde \delta_{z} \partial_{x_i} \bv^j \diff x& \forall \bv \in \bH^1_0(\Omega), \\
  b(\bG,q) = 0 &\forall q \in L^2(\Omega)/\R,
  \end{dcases}
\end{equation}
where $i, j \in \{ 1,\cdots,d\}$. Notice that the functions $\bG$ and $Q$ depend on $z$ and the indices $i$ and $j$. However, to alleviate notation we will omit this dependence.

We also define $(\bG_h,Q_h)$, the Stokes projection of $(\bG,Q)$, as the solution to the discrete problem: Find $(\bG_h,Q_h) \in \V_h \times \P_h$ such that
\begin{equation}
\label{eq:StokesDiscreteGreen}
  \begin{dcases}
    a(\bG_h,\bv_h) + b(\bv_h,Q_h) =  \int_{\Omega} \tilde \delta_{z} \partial_{x_i} \bv_h^j \diff x   & \forall \bv_h \in \V_h, \\
    b(\bG_h,q_h) = 0 &\forall q_h \in \P_h.
  \end{dcases}
\end{equation}

Let $R$ be a fixed positive number such that for any $x \in \bar \Omega$ the ball $B(x,R)$ contains $\Omega$. For $y \in \Omega$, we define the weight function $\sigma_{y}$, introduced by Natterer \cite{MR0474884}, as
\begin{equation}
 \label{eq:sigma}
 \sigma_{y} (x) = \left( | x - y|^2 + \left( \kappa h \right)^2 \right)^{1/2},
\end{equation}
where $\kappa > 1$ is a parameter independent of $h$ but such that $\kappa h \leq R$; see \cite[Section 1.7]{MR3422453}. We recall that this weight verifies \cite[inequality (0.18)]{MR2121575}
\begin{equation}
\label{eq:integrateNattererweight}
  \int_\Omega \sigma_y^{-d-\lambda} \diff x \lesssim h^{-\lambda},
  \quad \lambda \in (0,1).
\end{equation}

We shall assume that if $\nu \in (0,1/2)$, $ 0 < \lambda < \nu/2$, $\mu = d + \lambda$, and $\T$ is quasiuniform, then there exists $\kappa_1 > 1$ such that for all $\kappa \geq \kappa_1$ and for all meshsizes $h > 0$ such that $\kappa h \leq R$, we have 
\begin{equation}
\label{eq:bG-bGT}
 \sup_{y \in \Omega} \left \| \sigma_{y}^{\frac{\mu}{2}} \nabla (\bG - \bG_h) \right \|_{\bL^2(\Omega)} \lesssim h^{\lambda/2}.
\end{equation}
Examples of spaces that satisfy this assumption will be presented below.

\section{Discrete stability estimates in weighted spaces}
\label{sec:discrete_stability}
Let $p \in (1,\infty)$, $\omega \in A_p$, $(\ue,\pe) \in \bW^{1,p}_0(\omega,\Omega)\times L^p(\omega,\Omega)/\R$ with $\ue$ solenoidal, and the pair $(\ue_h,p_h) \in \V_h \times\P_h$ be the finite element approximation of $(\ue,\pe)$. Our goal in this section is to, on the basis of the weighted compatibility conditions \eqref{eq:infsup_discrete}, derive the weighted stability estimate \eqref{eq:WeightedStability}. 
To do so, we must place some restrictions on the range of the integrability $p$ and the weight $\omega$. We codify these in the following assumption
%\begin{equation}
%\label{eq:condtionpandw}
%\tag{S}
%  p \in (P', \infty ),\ \omega \in A_p(\Omega), \text{ and }
%  \begin{dcases}
%    p \geq 2, & \implies  \omega \in A_1, \\
%    p < 2, & \implies \omega' \in A_1,
%  \end{dcases}
%\end{equation}
% \begin{equation}
% \label{eq:condtionpandw}
% \tag{S}
% \begin{dcases}
%   p \in (P', \infty ) \cap [2,\infty) & \implies \omega \in A_p \cap A_1,
%   \\
%   p \in (P', \infty ) \cap (1,2) & \implies \omega \in A_p(\Omega), \ \omega'  \in A_1,
% \end{dcases}
% \end{equation}
\begin{equation}
\label{eq:condtionpandw}
\tag{S}
\begin{dcases}
  p \in (2,\infty) & \implies \omega \in A_1, \\
  p  = 2 & \implies \omega \in A_1, \text{ or } \omega^{-1} \in A_2(\Omega) \cap A_1, \\
  p \in (P', 2] & \implies \omega' \in A_{p'}(\Omega) \cap A_1,
\end{dcases}
\end{equation}
where $P$ is as in Proposition \ref{prop:OSwellposed} and $P'$ is its H\"older conjugate.

\begin{theorem}[weighted stability estimate]
\label{thm:weightstab}
Let $d \in \{2,3\}$ and $\Omega \subset \R^d$ be an open convex polytope. Assume that \eqref{eq:condtionpandw} holds and that $(\ue,\pe) \in \bW^{1,p}_0(\omega,\Omega) \times L^p(\omega,\Omega)/\R$ with $\ue$ solenoidal. Let $(\ue_h,\pe_h) \in \V_h \times \P_h$ be its finite element Stokes projection. If the spaces $(\V_h,\P_h)$ satisfy \eqref{eq:infsup_discrete} and \eqref{eq:bG-bGT}, then estimate \eqref{eq:WeightedStability} holds. The hidden constant in this estimate is independent of $(\ue,\pe)$, $(\ue_h,\pe_h)$, and $h$.
\end{theorem}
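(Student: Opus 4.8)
The plan is to first reduce everything to a bound on $\|\nabla\ue_h\|_{\bL^p(\omega,\Omega)}$ and treat the pressure as a by-product of the discrete inf--sup condition. Testing the first line of \eqref{eq:Stokesproj} with an arbitrary $\bv_h\in\V_h$ gives $b(\bv_h,\pe_h)=a(\ue-\ue_h,\bv_h)+b(\bv_h,\pe)$. Dividing by $\|\nabla\bv_h\|_{\bL^{p'}(\omega',\Omega)}$, taking the supremum over $\bv_h$, applying the weighted H\"older inequality, and invoking the first inequality in \eqref{eq:infsup_discrete} with $q_h=\pe_h$ yields
\[
 \|\pe_h\|_{L^p(\omega,\Omega)} \lesssim \|\nabla(\ue-\ue_h)\|_{\bL^p(\omega,\Omega)} + \|\pe\|_{L^p(\omega,\Omega)}.
\]
Thus, once we show $\|\nabla\ue_h\|_{\bL^p(\omega,\Omega)}\lesssim\|\nabla\ue\|_{\bL^p(\omega,\Omega)}+\|\pe\|_{L^p(\omega,\Omega)}$, the triangle inequality closes \eqref{eq:WeightedStability}.

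For the velocity gradient I would treat the primal regime $p\ge2$, $\omega\in A_1$ first, estimating $\partial_{x_i}\ue_h^j(z)$ at a point $z$ in the interior of an element through the regularized Green's function $(\bG_h,Q_h)$. Choosing $\bv_h=\ue_h$ in \eqref{eq:StokesDiscreteGreen}, whose right--hand side then equals $\partial_{x_i}\ue_h^j(z)$ by the reproduction property of $\tilde\delta_z$, and using $b(\ue_h,Q_h)=0$, gives $\partial_{x_i}\ue_h^j(z)=a(\bG_h,\ue_h)$. Choosing next $\bv_h=\bG_h$ in \eqref{eq:Stokesproj} and using $b(\bG_h,\pe_h)=0$ yields
\[
 \partial_{x_i}\ue_h^j(z)=\int_\Omega\nabla\ue:\nabla\bG_h\diff x-\int_\Omega\pe\,\DIV\bG_h\diff x .
\]
Since the exact Green's function $\bG$ of \eqref{eq:StokesWeakGreen} is pointwise solenoidal we have $\DIV\bG_h=\DIV(\bG_h-\bG)$; moreover $A_1$ weights are bounded below on the bounded set $\Omega$, so $\ue\in\bH^1_0(\Omega)$ and testing \eqref{eq:StokesWeakGreen} with $\bv=\ue$ produces $a(\ue,\bG)=\int_\Omega\tilde\delta_z\,\partial_{x_i}\ue^j\diff x$. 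Subtracting this identity rewrites the representation as a mollified exact point value plus a consistency error involving \emph{only} $\bG-\bG_h$; this is the step that removes the logarithmically divergent contribution of $\nabla\bG$ and opens the door to a log--free bound.

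The mollified point value is immediately controlled by $\mathcal{M}(|\nabla\ue|)(z)$. For the error I would, for each fixed $z$, apply a weighted Cauchy--Schwarz inequality with the Natterer weight $\sigma_z$ and use the hypothesis \eqref{eq:bG-bGT} to absorb $\|\sigma_z^{\mu/2}\nabla(\bG-\bG_h)\|_{\bL^2(\Omega)}\lesssim h^{\lambda/2}$, and likewise for $\DIV(\bG_h-\bG)$. A dyadic decomposition in annuli about $z$, combined with the integrability \eqref{eq:integrateNattererweight}, shows $\int_\Omega\sigma_z^{-\mu}(x)|\nabla\ue(x)|^2\diff x\lesssim h^{-\lambda}\,\mathcal{M}(|\nabla\ue|^2)(z)$, and similarly with $\pe$. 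This produces the pointwise bound
\[
 |\nabla\ue_h(z)|\lesssim\mathcal{M}(|\nabla\ue|)(z)+\bigl(\mathcal{M}(|\nabla\ue|^2)(z)\bigr)^{1/2}+\bigl(\mathcal{M}(|\pe|^2)(z)\bigr)^{1/2}.
\]
For $p>2$ one raises to the power $p$, integrates against $\omega$, and uses the boundedness of $\mathcal{M}$ on $L^{p/2}(\omega)$, which holds because $\omega\in A_1\subset A_{p/2}$. The endpoint $p=2$ is handled directly: integrating $\int_\Omega\sigma_z^{-\mu}(x)\,\omega(z)\diff z$ by Fubini and the symmetry $\sigma_z(x)=\sigma_x(z)$ reduces matters to $\int_\Omega\sigma_x^{-\mu}(z)\,\omega(z)\diff z\lesssim h^{-\lambda}\mathcal{M}\omega(x)\lesssim h^{-\lambda}\omega(x)$, where the last step is exactly the $A_1$ property \eqref{eq:A1usingMax}; the factor $h^{-\lambda}$ then cancels the $h^{\lambda}$ coming from \eqref{eq:bG-bGT}. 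Either way one obtains $\|\nabla\ue_h\|_{\bL^p(\omega,\Omega)}\lesssim\|\nabla\ue\|_{\bL^p(\omega,\Omega)}+\|\pe\|_{L^p(\omega,\Omega)}$, settling the case $p\ge2$, $\omega\in A_1$.

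The case $p\in(P',2]$ with $\omega'\in A_{p'}(\Omega)\cap A_1$ I would obtain by duality: writing $\|\nabla\ue_h\|_{\bL^p(\omega,\Omega)}$ as a supremum of $a(\ue_h,\bw_h)$ over discretely solenoidal $\bw_h$ normalized in $\bL^{p'}(\omega',\Omega)$, I would identify $\bw_h$ with the Stokes projection of the solution of an adjoint Stokes problem, well posed by Proposition~\ref{prop:OSwellposed} and the second inequality in \eqref{eq:infsup_discrete}, and then apply the already--proven estimate with the conjugate pair $(p',\omega')$, which falls in the primal regime since $p'\ge2$ and $\omega'\in A_1$. The remaining $p=2$ alternative $\omega^{-1}\in A_2(\Omega)\cap A_1$ is the same argument read on the conjugate side. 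The main obstacle, as I see it, is the core estimate of the third paragraph: trading the logarithm intrinsic to the Stokes Green's function for the quantitative gain $h^{\lambda/2}$ of \eqref{eq:bG-bGT}, and then transferring the Natterer weight onto the Muckenhoupt weight $\omega$ so that the $A_1$ bound \eqref{eq:A1usingMax} absorbs it uniformly in $z$ and $h$ --- the genuinely delicate point being the endpoint $p=2$, where $\mathcal{M}$ is unbounded on $L^1(\omega)$ and the weight must instead be moved through the kernel by hand.
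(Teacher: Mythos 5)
Your proposal is correct and follows essentially the same architecture as the paper's proof: pressure from velocity via the first inequality in \eqref{eq:infsup_discrete}; the representation of $\partial_{x_i}\ue_h^j(z)$ through the regularized Green's function, obtained by testing \eqref{eq:StokesDiscreteGreen} with $\ue_h$, \eqref{eq:Stokesproj} with $\bG_h$, and \eqref{eq:StokesWeakGreen} with $\ue$; control of the resulting consistency terms by the Natterer weight together with \eqref{eq:bG-bGT}; and duality for $p\in(P',2]$. Within this common skeleton you make two genuine variations, both legitimate. First, where the paper reduces to smooth $(\ue,\pe)$ by density, you justify testing \eqref{eq:StokesWeakGreen} with $\ue$ directly by noting that $A_1$ weights are bounded below on $\Omega$, so $\ue\in\bH^1_0(\Omega)$; this is cleaner. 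Second, for $p>2$ the paper splits $a(\ue,\bE)$ by a three-factor H\"older inequality with exponents $(p,2,2p/(p-2))$, using \eqref{eq:integrateNattererweight} on the third factor, raises to the $p$-th power, and then, for every $p\ge 2$ at once, transfers the kernel $h^{\lambda}\sigma_z^{-(d+\lambda)}$ onto $\omega$ by Fubini and \eqref{eq:A1usingMax}; you instead use plain Cauchy--Schwarz, the dyadic-annuli bound $\int_\Omega\sigma_z^{-(d+\lambda)}|\nabla\ue|^2\diff x\lesssim h^{-\lambda}\mathcal{M}(|\nabla\ue|^2)(z)$, and the boundedness of $\mathcal{M}$ on $L^{p/2}(\omega)$ (valid since $\omega\in A_1\subset A_{p/2}$), reserving the Fubini/kernel argument for the endpoint $p=2$, where $\mathcal{M}$ fails on $L^1(\omega)$. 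Both computations close; the paper's treats all $p\ge2$ uniformly, while yours isolates transparently why $p=2$ is the delicate case. One caveat on the dual range: your opening claim that $\|\nabla\ue_h\|_{\bL^p(\omega,\Omega)}$ \emph{is} a supremum of $a(\ue_h,\bw_h)$ over discretely solenoidal $\bw_h$ is not available a priori --- it is an output of the argument, not a valid starting point. You should begin, as the paper does, from the continuous duality $\|\nabla\ue_h\|_{\bL^p(\omega,\Omega)}=\sup_{\bg}\langle\bg,\ue_h\rangle/\|\bg\|_{\bW^{-1,p'}(\omega',\Omega)}$, insert the adjoint solution $(\bvarphi_\bg,\psi_\bg)$, and only then use Galerkin orthogonality to replace it by its Stokes projection; carried out carefully this also produces the term $b(\bvarphi_{\bg,h},\pe)$ (because $\bvarphi_{\bg,h}$ is only discretely solenoidal), which is harmless: it is bounded by $\|\pe\|_{L^p(\omega,\Omega)}$ times the conjugate stability constant, consistent with \eqref{eq:WeightedBu}.
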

\begin{proof} We begin by noticing that, by density, it suffices to show the estimate assuming that $\ue$ and $\pe$ are smooth.

We split the proof in several steps.

\begin{enumerate}[1.]
  \item Assume that we have already shown that
  \begin{equation}
  \label{eq:WeightedBu}
    \| \nabla \ue_h \|_{\bL^p(\omega,\Omega)}  \lesssim  \| \nabla \ue \|_{\bL^p(\omega,\Omega)}  
      +  \| \pe \|_{L^p(\omega,\Omega)}.
  \end{equation}
Utilizing the first discrete inf--sup condition of \eqref{eq:infsup_discrete} and that $(\ue_h,\pe_h)$ solves \eqref{eq:Stokesproj}, we arrive at
  \[
   \| \pe_h \|_{L^p(\omega,\Omega)} \lesssim 
   \sup_{ \bv_h \in \V_h } \frac{b (\bv_h, \pe_h) }{ \| \nabla \bv_h \|_{\bL^{p'}(\omega',\Omega)}}  =
    \sup_{ \bv_h \in \V_h } \frac{ a(\ue,\bv_h) - a(\ue_h, \bv_h) + b(\bv_h, \pe)}{ \| \nabla \bv_h \|_{\bL^{p'}(\omega',\Omega)}},
   \]
  which immediately yields
  \[
    \| \pe_h \|_{L^p(\omega,\Omega)}  \lesssim \| \nabla \ue \|_{\bL^p(\omega,\Omega)} +  \| \pe \|_{L^p(\omega,\Omega)} + \| \nabla \ue_h \|_{\bL^p(\omega,\Omega)}.
  \]
  This, in view of \eqref{eq:WeightedBu}, implies the desired bound for $\| \pe_h \|_{L^p(\omega,\Omega)}$.
  
  \item Assume that $p\geq 2$ and $\omega \in A_1$. Set $\bv_h = \ue_h$ in \eqref{eq:StokesDiscreteGreen} to arrive at
  \[
    a(\bG_h, \ue_h)
    = \int_{\Omega} \tilde \delta_{z} \partial_{x_i} \ue_h^j \diff x =  \partial_{x_i} \ue_h^j (z).
  \]
  Set now $\bv_h = \bG_h$ in \eqref{eq:Stokesproj} and use that $b(\bG_h,q_h) = 0$ for all $q_h \in \P_h$ to obtain
  \begin{equation}
  \label{eq:AuxEstimateISA}
   a(\ue_h,\bG_h) =  a(\ue, \bG_h) + b(\bG_h, \pe) .
  \end{equation}
  Using that $b(\bG,\pe) = 0$, we can thus conclude the identity
  \[
    a(\ue_h,\bG_h) = a(\ue,\bG_h) + b(\bG_h,\pe) =a(\ue,\bG_h - \bG) + b(\bG_h - \bG,\pe) + a(\ue,\bG).
  \]
  Since the bilinear form $a$ is symmetric, we have
  \begin{align*}
  \partial_{x_i} \ue_h^j (z) & = a(\ue,\bG_h - \bG) + b(\bG_h - \bG,\pe) + a(\ue,\bG)
  \\
  & = a(\ue,\bG_h - \bG) + b(\bG_h - \bG,\pe) + \int_{\Omega} \tilde \delta_{z} \partial_{x_i} \ue^j  \diff x.
  \end{align*}
  Notice that here we used the smoothness assumption on $\ue$ to be able to assert that this is an admissible test function in \eqref{eq:StokesWeakGreen}.

  Let now $\bE = \bG - \bG_h$. The previous equality implies that
  \begin{multline*}
   \int_{\Omega} \omega |\partial_{x_i} \ue_h^j|^p \diff z \lesssim 
   \int_{\Omega} \omega\left[\int_{\Omega}  \nabla \ue : \nabla  \bE \diff x \right]^p  \diff z \\
   + 
   \int_{\Omega} \omega \left[\int_{\Omega}  \pe \DIV \bE \diff x \right]^p \diff z 
   +
   \int_{\Omega} \omega  \left[\fint_{T_{z}}  |\nabla \ue| \diff x \right]^p \diff z =: \mathrm{I} + \mathrm{II} + \mathrm{III},
  \end{multline*}
  where we have used that $\tilde \delta_{z}$ is supported on $T_{z}$ and that $\| \tilde \delta_{z}\|_{L^{\infty}(\Omega)} \lesssim h^{-d}$.
    
  We estimate the terms $\mathrm{I}$, $\mathrm{II}$, and $\mathrm{III}$ with the help of \eqref{eq:bG-bGT}, similar arguments to those developed in the proof of \cite[Theorem 3.1]{DDO:17}, and modifications inspired by \cite{MR645661}. We begin by controlling the term $\mathrm{III}$. Since the weight $\omega \in A_1 \subset A_p$, we utilize that the Hardy--Littlewood maximal operator $\mathcal{M}$ is continuous from $L^p(\omega,\R^d)$ to $L^p(\omega,\R^d)$ to arrive at
  \[
    \mathrm{III} =  \int_{\Omega} \omega  \left[ \fint_{T_{z}}  |\nabla \ue |\diff x \right]^p \diff z \lesssim \int_{\Omega} \omega \mathcal{M}(|\nabla \ue|)^p \diff z \lesssim \int_{\Omega} \omega  |\nabla \ue|^p \diff z.
  \]

  We now control $\mathrm{I}$ and $\mathrm{II}$. Using the weight $\sigma_{z}$, defined in \eqref{eq:sigma}, and its property \eqref{eq:integrateNattererweight} we have that for any $\lambda \in (0,1)$
  \[
   \int_{\Omega}  \nabla \ue :\nabla \bE \diff x \lesssim h^{-\lambda(p-2)/(2p)} \left(\int_{\Omega} \sigma_{z}^{-d-\lambda} |\nabla \ue|^p \diff x \right)^{1/p} \left(\int_{\Omega} \sigma_{z}^{d+\lambda}|\nabla \bE|^2 \diff x \right)^{1/2}
  \]
  and 
  \[
   \int_{\Omega}  \pe \DIV \bE \diff x \lesssim h^{-\lambda(p-2)/(2p)} \left(\int_{\Omega} \sigma_{z}^{-d-\lambda} | \pe|^p \diff x \right)^{1/p} \left(\int_{\Omega} \sigma_{z}^{d+\lambda}|\DIV \bE|^2 \diff x \right)^{1/2}.
  \]
  Thus, we have that
  \[
  \mathrm{I} + \mathrm{II} \lesssim h^{-\lambda(p-2)/2}\int_{\Omega} \omega \left(\int_{\Omega} \sigma_{z}^{d+\lambda}|\nabla \bE|^2 \diff x \right)^{p/2} \left(\int_{\Omega} \frac{ |\nabla \ue|^p +| \pe|^p }{\sigma_{z}^{d+\lambda}} \diff x \right)\diff z.
  \]
  Assume now that $0 < \lambda < \nu/2$ with $\nu \in (0,1/2)$. In this case estimate \eqref{eq:bG-bGT} immediately yields
  \[
    h^{-\lambda(p-2)/2}\left( \int_{\Omega} \sigma_{z}^{d+\lambda}|\nabla \bE|^2 \diff x \right)^{p/2} \lesssim h^{\lambda}.
  \]
  In addition, the arguments developed in the proof of \cite[Theorem 3.1]{DDO:17} yield
  \[
  \int_{\Omega} \frac{ h^{\lambda} \omega(z)}{(|x-z|^2 + (\kappa h)^2 )^{(d+\lambda)/2}}\diff z  \lesssim \mathcal{M}\omega(x) \lesssim \omega(x),
  \] 
  where, in the last step, we used \eqref{eq:A1usingMax}. In conclusion, we obtained that
  \begin{align*}
  \mathrm{I} + \mathrm{II} &\lesssim \int_{\Omega}\int_{\Omega}  \frac{ \omega(z) h^{\lambda} }{ (|x-z|^2 + (\kappa h)^2 )^{(d+\lambda)/2} } \diff z \left(|\nabla \ue(x)|^p +| \pe(x)|^p \right) \diff x  
  \\
  &\lesssim  \int_{\Omega} \omega(x) \left(|\nabla \ue(x)|^p +| \pe(x)|^p \right) \diff x.
  \end{align*}
  A collection of the estimates for the terms $\mathrm{I}$,  $\mathrm{II}$, and $\mathrm{III}$ yield \eqref{eq:WeightedBu} when $p \geq 2$.
  
  \item It remains to consider the case $p \in (P',2]$ with $\omega' \in A_{p'}(\Omega) \cap A_1$. Notice that $p' = p / (p-1) \geq 2$ so that, as in \cite[Corollary 3.3]{DDO:17}, we will reduce our considerations to the previous case. Since $p' \in [2,P)$ and $\omega' \in A_{p'}(\Omega)$ then, as Proposition~\ref{prop:OSwellposed} shows, for every $\bg \in \bW^{-1,p'}(\omega',\Omega)$ we conclude that the Stokes problem
  \[
    \begin{dcases}
      -\Delta \bvarphi_\bg + \GRAD \psi_\bg = \bg, & \text{in } \Omega, \\
      \DIV \bvarphi_\bg = 0, & \text{in } \Omega, \\
      \bvarphi_\bg = 0, & \text{on } \partial\Omega,
    \end{dcases}
  \]
  is well-posed in $\bW^{1,p'}_0(\omega',\Omega) \times L^{p'}(\omega',\Omega)/\R$. So that we have the estimate
  \[
    \| \GRAD \bvarphi_\bg \|_{\bL^{p'}(\omega',\Omega)} + \| \psi_\bg \|_{L^{p'}(\omega',\Omega)} \lesssim \| \bg \|_{\bW^{-1,p'}(\omega',\Omega)}.
  \]
  Let $(\bvarphi_{\bg,h} , \psi_{\bg,h}) \in \V_h \times \P_h$ be the Stokes projection of $(\bvarphi_\bg,\psi_\bg)$ we have
  \begin{align*}
    \| \GRAD \ue_h \|_{\bL^p(\omega,\Omega)} &= \sup_{\bg \in \bW^{-1,p'}(\omega',\Omega)} \frac{ \langle \bg, \ue_h \rangle }{\| \bg \|_{\bW^{-1,p'}(\omega',\Omega)}} \\ 
    &= \sup_{\bg \in \bW^{-1,p'}(\omega',\Omega)} \frac{ a(\ue_h,\bvarphi_\bg) + b(\ue_h, \psi_\bg) }{\| \bg \|_{\bW^{-1,p'}(\omega',\Omega)}} \\
    &= \sup_{\bg \in \bW^{-1,p'}(\omega',\Omega)} \frac{ a(\bvarphi_{h,\bg}, \ue_h) + b(\ue_h, \psi_{h,\bg}) }{\| \bg \|_{\bW^{-1,p'}(\omega',\Omega)}} \\
    &= \sup_{\bg \in \bW^{-1,p'}(\omega',\Omega)} \frac{ a(\ue, \bvarphi_{h,\bg}) + b(\ue, \psi_{h,\bg}) }{\| \bg \|_{\bW^{-1,p'}(\omega',\Omega)}}.
  \end{align*}
  The stability of the Stokes projection in $\bW^{1,p'}(\omega',\Omega) \times L^{p'}(\omega',\Omega)$ and the bound on $(\bvarphi_\bg,\psi_\bg)$ yield
  \[
    \|  \nabla \ue_h \|_{\bL^p(\omega,\Omega)} \lesssim \| \nabla \ue\|_{\bL^{p}(\omega,\Omega)}.
  \]
\end{enumerate}
The proof is thus complete.
\end{proof}

% \EO{\begin{remark}[$p=2$ and $\omega^{-1} \in A_1 \cap A_2(\Omega)$]    
% For the particular application that we will review in Section \ref{subsec:application}, the following framework will be of interest: $p = 2$ and $\omega' = \omega^{-1} \in A_2(\Omega) \cap A_1$. As an immediate consequence of Theorem \ref{thm:infsup_discreteb}, the estimate \eqref{eq:WeightedStability} holds.
% \label{rem:particular_application}
% \end{remark}
% }

As usual, the a priori estimate \eqref{eq:WeightedStability} implies a best approximation result \emph{\`a la} Cea.

\begin{corollary}[best approximation]
\label{cor:Cea}
In the setting of Theorem~\ref{thm:weightstab}, assume, in addition, that $p \in (P',P)$, and $\omega \in A_p(\Omega)$. Then we have that
\begin{multline*}
  \| \nabla (\ue- \ue_h) \|_{\bL^p(\omega,\Omega)}  +  \| \pe - \pe_h \|_{L^p(\omega,\Omega)} \lesssim \inf_{\bw_h \in \V_h} \| \nabla (\ue- \bw_h) \|_{\bL^p(\omega,\Omega)}  \\
  +  \inf_{r_h \in \P_h} \| \pe - r_h \|_{L^p(\omega,\Omega)},
\end{multline*}
where the hidden constant is independent of $(\ue,\pe)$, $(\ue_h,p_h)$, and $h$.
\end{corollary}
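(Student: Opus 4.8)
The plan is to run the classical quasi-optimality argument for mixed finite element discretizations, in which the discrete stability estimate \eqref{eq:WeightedStability} of Theorem~\ref{thm:weightstab} plays the role of coercivity on the discretely divergence--free kernel, while the compatibility conditions \eqref{eq:infsup_discrete} play the role of the LBB condition. The starting point is Galerkin orthogonality, read off directly from \eqref{eq:Stokesproj}: one has $a(\ue-\ue_h,\bv_h)+b(\bv_h,\pe-\pe_h)=0$ for all $\bv_h\in\V_h$, and, since $\DIV\ue=0$ forces $b(\ue,q_h)=0$, also $b(\ue-\ue_h,q_h)=0$ for all $q_h\in\P_h$. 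I would then treat the pressure and the velocity separately, the pressure being routine and the velocity being the crux.

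For the pressure, fix $r_h\in\P_h$. Applying the first estimate in \eqref{eq:infsup_discrete} to $\pe_h-r_h\in\P_h$, using the Galerkin identity $b(\bv_h,\pe_h-r_h)=a(\ue-\ue_h,\bv_h)+b(\bv_h,\pe-r_h)$, and invoking a weighted H\"older inequality based on the conjugate relation $\omega^{1/p}(\omega')^{1/p'}\equiv1$, yields $\|\pe_h-r_h\|_{L^p(\omega,\Omega)}\lesssim\|\GRAD(\ue-\ue_h)\|_{\bL^p(\omega,\Omega)}+\|\pe-r_h\|_{L^p(\omega,\Omega)}$. A triangle inequality then reduces the pressure error to the velocity error plus the best pressure approximation, so it suffices to bound $\|\GRAD(\ue-\ue_h)\|_{\bL^p(\omega,\Omega)}$.

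For the velocity, fix $\bw_h\in\V_h$. I would first perform a Fortin--type correction: since $\ue$ is solenoidal, the map $q_h\mapsto b(\ue-\bw_h,q_h)$ is a functional on $\P_h$ of norm $\lesssim\|\GRAD(\ue-\bw_h)\|_{\bL^p(\omega,\Omega)}$, so the second condition in \eqref{eq:infsup_discrete} produces $\bz_h\in\V_h$ with $b(\bz_h,q_h)=b(\ue-\bw_h,q_h)$ and $\|\GRAD\bz_h\|_{\bL^p(\omega,\Omega)}\lesssim\|\GRAD(\ue-\bw_h)\|_{\bL^p(\omega,\Omega)}$. Replacing $\bw_h$ by $\bw_h+\bz_h$, I may assume, at the cost of the inf--sup constant, that the approximant lies in the discretely divergence--free subspace $\V_h^{0}:=\{\bv_h\in\V_h:b(\bv_h,q_h)=0\ \forall q_h\in\P_h\}$. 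With such a $\bw_h$, Galerkin orthogonality shows that $\ue_h-\bw_h\in\V_h^{0}$ solves the reduced problem $a(\ue_h-\bw_h,\bv_h)=a(\ue-\bw_h,\bv_h)+b(\bv_h,\pe-r_h)$ for all $\bv_h\in\V_h^{0}$, whose datum $\ell$ has $(\V_h^{0})'$--norm $\lesssim\|\GRAD(\ue-\bw_h)\|_{\bL^p(\omega,\Omega)}+\|\pe-r_h\|_{L^p(\omega,\Omega)}$.

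The main obstacle is that $\ue_h-\bw_h$ is \emph{not} the Stokes projection of a pointwise solenoidal field, so Theorem~\ref{thm:weightstab} cannot be applied verbatim; this is exactly where the extra hypotheses $p\in(P',P)$ and $\omega\in A_p(\Omega)$ are needed. I would resolve it by proving stability of the reduced problem for \emph{arbitrary} $\ell\in(\V_h^{0})'$: extend $\ell$ by Hahn--Banach to some $\bef\in\bW^{-1,p}(\omega,\Omega)$ without increasing its norm, use Proposition~\ref{prop:OSwellposed} to solve the continuous weighted Stokes problem for a solenoidal $(\bvarphi,\psi)$ with $\|\GRAD\bvarphi\|_{\bL^p(\omega,\Omega)}+\|\psi\|_{L^p(\omega,\Omega)}\lesssim\|\ell\|$, and note that the Stokes projection of $(\bvarphi,\psi)$ solves the reduced discrete problem with datum $\ell$ (testing its first equation with $\bv_h\in\V_h^{0}$ kills the pressure term and recovers $\ell$). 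By uniqueness of the reduced problem this projection coincides with $\ue_h-\bw_h$, and \eqref{eq:WeightedStability} bounds its weighted gradient by $\|\ell\|$. Taking $\ell(\bv_h)=a(\ue-\bw_h,\bv_h)+b(\bv_h,\pe-r_h)$ gives $\|\GRAD(\ue_h-\bw_h)\|_{\bL^p(\omega,\Omega)}\lesssim\|\GRAD(\ue-\bw_h)\|_{\bL^p(\omega,\Omega)}+\|\pe-r_h\|_{L^p(\omega,\Omega)}$; a final triangle inequality, together with the Fortin correction guaranteeing that restricting to $\V_h^{0}$ does not inflate the best velocity approximation, and taking the infimum over $\bw_h\in\V_h$ and $r_h\in\P_h$, closes the argument.
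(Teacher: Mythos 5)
Your proof is correct, but its execution differs from the paper's in a way worth noting. The paper's argument is a one-shot duality that treats velocity and pressure together: by linearity, $(\ue_h-\bw_h,\pe_h-r_h)$ solves the discrete system whose data are the residuals $a(\ue-\bw_h,\cdot)+b(\cdot,\pe-r_h)$ and $b(\ue-\bw_h,\cdot)$; the paper then solves the \emph{continuous} weighted problem with exactly these data (this is where Proposition~\ref{prop:OSwellposed}, hence $p\in(P',P)$ and $\omega\in A_p(\Omega)$, enters), observes that $(\ue_h-\bw_h,\pe_h-r_h)$ is precisely the Stokes projection of that auxiliary solution $(\bvarphi,\psi)$, applies Theorem~\ref{thm:weightstab}, and concludes with the triangle inequality --- no Fortin correction, no Hahn--Banach extension, no reduced problem, and no separate pressure step. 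You instead bound the pressure through the first condition in \eqref{eq:infsup_discrete} (essentially Step 1 of the proof of Theorem~\ref{thm:weightstab} applied to $\pe_h-r_h$), and handle the velocity by pushing the approximant into the discretely divergence-free space $\V_h^0$ via the second condition in \eqref{eq:infsup_discrete}, extending the reduced datum by Hahn--Banach, and identifying $\ue_h-\bw_h-\bz_h$ with the velocity Stokes projection of a continuous solenoidal solution. Your longer route buys genuine rigor: your auxiliary velocity $\bvarphi$ is solenoidal, so Proposition~\ref{prop:OSwellposed} and Theorem~\ref{thm:weightstab} apply \emph{verbatim}. In the paper's argument the auxiliary problem carries the inhomogeneous constraint $b(\bvarphi,q)=b(\ue-\bw_h,q)$, so its $\bvarphi$ is \emph{not} solenoidal, and both the definition \eqref{eq:Stokesproj} of the Stokes projection (whose second equation reads $\int_\Omega q_h\DIV\ue_h\diff x=0$) and Theorem~\ref{thm:weightstab} (stated only for solenoidal fields) are being invoked there in a tacitly extended form; your detour through $\V_h^0$ avoids that extension. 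The only price is the extra machinery: in particular, the step ``discrete inf--sup implies surjectivity of the discrete divergence with a bounded right inverse,'' which underlies your Fortin correction, is standard (a finite-dimensional closed-range argument) but should be stated or cited rather than merely asserted.
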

\begin{proof}
The proof is rather standard but we reproduce it here for the sake of completeness. Notice that, if $\bw_h \in \V_h$ and $r_h \in \P_h$ are arbitrary, by linearity of \eqref{eq:Stokesproj} we obtain that, for all $(\bv_h,q_h) \in \V_h \times \P_h$ we have
\[
  \begin{dcases}
    a(\ue_h - \bw_h,\bv_h) + b(\bv_h,\pe_h-r_h) = a(\ue - \bw_h,\bv_h) + b(\bv_h,\pe-r_h), \\
    b(\ue_h-\bw_h,q_h) = b(\ue - \bw_h,q_h).
  \end{dcases}
\]

Let now $(\bvarphi,\psi) \in \bW^{1,p}_0(\omega,\Omega) \times L^p(\omega,\Omega)/\R$ be the unique solution of
\[
  \begin{dcases}
    a(\bvarphi,\bv) + b(\bv,\psi) = a(\ue - \bw_h,\bv) + b(\bv,\pe-r_h), & \forall \bv \in \bW^{1,p'}_0(\omega',\Omega), \\
    b(\bvarphi,q) = b(\ue - \bw_h,q), & \forall q \in L^{p'}(\omega',\Omega)/\R.
  \end{dcases}
\]
As shown in Proposition~\ref{prop:OSwellposed}, the assumptions on the integrability index and the weight allow us to conclude that this problem is well posed and we have the estimate
\begin{equation}
\label{eq:pdetrick}
  \| \nabla \bvarphi \|_{\bL^p(\omega,\Omega)}  +  \| \psi \|_{L^p(\omega,\Omega)} \lesssim \| \nabla (\ue - \bw_h) \|_{\bL^p(\omega,\Omega)}  +  \| \pe - r_h \|_{L^p(\omega,\Omega)}.
\end{equation}

Notice now that $(\ue_h-\bw_h, \pe_h - r_h) \in \V_h \times \P_h$ is nothing but the finite element approximation of $(\bvarphi ,\psi ) \in \bW^{1,p}_0(\omega,\Omega) \times L^p(\omega,\Omega)/\R$. This, in conjunction with Theorem~\ref{thm:weightstab} and \eqref{eq:pdetrick} then yields
\begin{multline*}
  \| \nabla (\ue_h - \bw_h) \|_{\bL^p(\omega,\Omega)}  +  \| \pe_h - r_h \|_{L^p(\omega,\Omega)} \lesssim \| \nabla (\ue - \bw_h) \|_{\bL^p(\omega,\Omega)}  \\ +  \| \pe - r_h \|_{L^p(\omega,\Omega)}.
\end{multline*}

Conclude with the triangle inequality.
\end{proof}

\section{Error estimates}
\label{sec:error_estimates}

We now provide a $\bL^p(\Omega)$--error estimate for the error approximation of the velocity field. For that, obviously, one needs to assume that Proposition~\ref{prop:embedding} holds, so that $\ue \in \bL^p(\Omega)$.

In what follows, for a weight $\omega$,  we denote by $\omega(h) = \sup_{T \in \T} \omega(T)$. The main error estimate is provided below. 

\begin{theorem}[error estimate]
\label{thm:errest}
Let $p \in [2,P)$
%$p\geq 2$ 
and $\omega \in A_p(\Omega)$ be such that condition \eqref{eq:condtionpandw} holds. Assume, in addition, that the compatibility condition required for Proposition~\ref{prop:embedding} to be valid holds. Let $(\ue,\pe) \in \bW^{1,p}_0(\omega,\Omega) \times L^p(\omega,\Omega)/\R$ with $\ue$ solenoidal, and let $(\ue_h,\pe_h) \in \V_h \times \P_h$ be its Stokes projection, defined as the solution of \eqref{eq:Stokesproj}. In this setting, we have that
\begin{equation}
 \label{eq:L2ErrorEstimate}
 \| \ue - \ue_h\|_{\bL^p(\Omega)} \lesssim h^{1+d/p} \omega(h)^{-1/p}\left( \| \nabla \ue  \|_{\bL^p(\omega,\Omega)} + \| \pe  \|_{L^p(\omega,\Omega)} \right),
\end{equation}
where the hidden constant is independent of $(\ue,\pe)$, $(\ue_h,\pe_h)$, and $h$.
\end{theorem}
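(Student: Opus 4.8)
The plan is to run an Aubin--Nitsche duality argument, since the quantity to be estimated is the \emph{unweighted} norm $\|\ue-\ue_h\|_{\bL^p(\Omega)}$, which is finite precisely because of the embedding hypothesis borrowed from Proposition~\ref{prop:embedding}. I would start from
\[
 \|\ue-\ue_h\|_{\bL^p(\Omega)} = \sup_{\bg\in\bL^{p'}(\Omega)} \frac{\int_\Omega \bg\cdot(\ue-\ue_h)\diff x}{\|\bg\|_{\bL^{p'}(\Omega)}},
\]
and, for each fixed $\bg$, introduce the dual Stokes problem with datum $\bg$. Since $\Omega$ is a convex polytope and $p'\in(P',2]\subset(1,2]$, Proposition~\ref{prop:MitreaWright} provides a solution $(\bvarphi,\psi)\in(\bW^{2,p'}(\Omega)\cap\bW^{1,p'}_0(\Omega))\times(W^{1,p'}(\Omega)/\R)$ with $\|\bvarphi\|_{\bW^{2,p'}(\Omega)}+\|\psi\|_{W^{1,p'}(\Omega)}\lesssim\|\bg\|_{\bL^{p'}(\Omega)}$. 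Applying Proposition~\ref{prop:embedding} to $\nabla\bvarphi\in\bW^{1,p'}(\Omega)$ shows $\nabla\bvarphi\in\bL^{p'}(\omega',\Omega)$, so that testing the dual problem against $\ue-\ue_h\in\bW^{1,p}_0(\omega,\Omega)$ is legitimate and yields $\int_\Omega\bg\cdot(\ue-\ue_h)\diff x = a(\ue-\ue_h,\bvarphi)+b(\ue-\ue_h,\psi)$.

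Next I would use Galerkin orthogonality. Subtracting the two instances of \eqref{eq:Stokesproj} gives $a(\ue-\ue_h,\bv_h)+b(\bv_h,\pe-\pe_h)=0$ and $b(\ue-\ue_h,q_h)=0$ for all $(\bv_h,q_h)\in\V_h\times\P_h$. Inserting the quasi-interpolants $\Pi_h\bvarphi\in\V_h$ and $\Pi_h\psi\in\P_h$ into the duality identity, and using that $\bvarphi$ is solenoidal so that $b(\bvarphi,\cdot)=0$, I would arrive at the representation
\[
 \int_\Omega\bg\cdot(\ue-\ue_h)\diff x = a(\ue-\ue_h,\bvarphi-\Pi_h\bvarphi)+b(\bvarphi-\Pi_h\bvarphi,\pe-\pe_h)+b(\ue-\ue_h,\psi-\Pi_h\psi).
\]
A weighted H\"older inequality with the conjugate pair $\bL^p(\omega,\Omega)$--$\bL^{p'}(\omega',\Omega)$ (using $\omega^{-p'/p}=\omega'$) then bounds the right-hand side by $\big(\|\nabla(\ue-\ue_h)\|_{\bL^p(\omega,\Omega)}+\|\pe-\pe_h\|_{L^p(\omega,\Omega)}\big)$ times $\big(\|\nabla(\bvarphi-\Pi_h\bvarphi)\|_{\bL^{p'}(\omega',\Omega)}+\|\psi-\Pi_h\psi\|_{L^{p'}(\omega',\Omega)}\big)$.

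The first factor I would control \emph{by the data}: condition \eqref{eq:condtionpandw} holds and $\omega\in A_p(\Omega)$ with $p\in[2,P)$, so the weighted stability Theorem~\ref{thm:weightstab} together with the triangle inequality gives $\|\nabla(\ue-\ue_h)\|_{\bL^p(\omega,\Omega)}+\|\pe-\pe_h\|_{L^p(\omega,\Omega)}\lesssim\|\nabla\ue\|_{\bL^p(\omega,\Omega)}+\|\pe\|_{L^p(\omega,\Omega)}$. For the second factor I would call on the ``interpolation in different metrics'' Proposition~\ref{prop:intH1wtoL2}. Its second estimate converts the unweighted $W^{1,p'}$ regularity of $\psi$ into the weighted interpolation bound $\|\psi-\Pi_h\psi\|_{L^{p'}(\omega',T)}\lesssim h_T^{1-d/p'}\omega'(\mathcal{S}_T)^{1/p'}\|\nabla\psi\|_{\bL^{p'}(\mathcal{S}_T)}$; the $A_p$ condition on the patch gives $\omega'(\mathcal{S}_T)^{1/p'}\lesssim h_T^{d}\omega(\mathcal{S}_T)^{-1/p}$, so this contribution carries exactly the factor $h_T^{1+d/p}\omega(\mathcal{S}_T)^{-1/p}$. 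Summing over the mesh in $\ell^{p'}$ and invoking quasiuniformity ($h_T\approx h$), finite overlap, and doubling to replace the patch quantities $\omega(\mathcal{S}_T)$ by the global $\omega(h)$, I obtain the factor $h^{1+d/p}\omega(h)^{-1/p}\|\bg\|_{\bL^{p'}(\Omega)}$; dividing by $\|\bg\|_{\bL^{p'}(\Omega)}$ and taking the supremum over $\bg$ then yields \eqref{eq:L2ErrorEstimate}.

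I expect the genuine difficulty to be the dual \emph{velocity} term $\|\nabla(\bvarphi-\Pi_h\bvarphi)\|_{\bL^{p'}(\omega',\Omega)}$: Proposition~\ref{prop:intH1wtoL2} as stated controls function values, not gradients, so I would need a gradient analogue of the different-metrics estimate for $\nabla\bvarphi\in\bW^{1,p'}(\Omega)$. I anticipate that such an estimate produces a factor of order $h^{2+d/p}\omega(\mathcal{S}_T)^{-1/p}$, one power of $h$ better than the pressure contribution, so that the velocity terms $a(\ue-\ue_h,\bvarphi-\Pi_h\bvarphi)$ and $b(\bvarphi-\Pi_h\bvarphi,\pe-\pe_h)$ are subordinate and the leading order is carried entirely by the dual pressure; making this rigorous (or, alternatively, regrouping the terms so the velocity gradient never appears at leading order) is the crux. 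The second delicate point is the weight bookkeeping in passing from the local $\omega(\mathcal{S}_T)$ to the single scalar $\omega(h)$, which rests on quasiuniformity and the doubling property of the $A_p$ class and is where the precise shape of the factor $\omega(h)^{-1/p}$ is fixed.
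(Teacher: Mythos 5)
Your proposal is correct in substance, but it follows a genuinely different decomposition than the paper's. The paper fixes the single datum $\bg=|\ue-\ue_h|^{p-2}(\ue-\ue_h)$ (rather than taking a supremum, and it spends a full step justifying, by density, that $\ue-\ue_h$ may be used as a test function in the dual problem), and then, instead of inserting quasi-interpolants of the dual solution, it inserts the \emph{Stokes projection} $(\bvarphi_{\bg,h},\psi_{\bg,h})$. A second application of Galerkin orthogonality then collapses the representation to
\[
  \|\ue-\ue_h\|^p_{\bL^p(\Omega)} = a(\ue , \bvarphi_{\bg} - \bvarphi_{\bg,h} ) + b(\bvarphi_{\bg}-\bvarphi_{\bg,h},\pe ),
\]
in which the discrete errors $\nabla(\ue-\ue_h)$ and $\pe-\pe_h$ have disappeared entirely and only $\|\nabla(\bvarphi_\bg-\bvarphi_{\bg,h})\|_{\bL^{p'}(\omega',\Omega)}$ remains to be estimated; this is handled by Corollary~\ref{cor:Cea} applied to the \emph{dual} pair $(p',\omega')$, which obliges the paper to verify that condition \eqref{eq:condtionpandw} and the class $A_p(\Omega)$ are stable under conjugation. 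Your three-term identity instead keeps $\nabla(\ue-\ue_h)$ and $\pe-\pe_h$ and controls them with Theorem~\ref{thm:weightstab} applied to the \emph{primal} pair $(p,\omega)$, for which \eqref{eq:condtionpandw} is assumed outright; you thus trade the conjugation bookkeeping for having to estimate two dual interpolation errors instead of one projection error. Both routes funnel into the same interpolation-in-different-metrics estimates and the same $A_p$ conversion $\omega'(T)^{1/p'}\lesssim h^d\omega(T)^{-1/p}$, so the approaches are equally legitimate; yours is the more classical Aubin--Nitsche layout.

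Two caveats. First, the ``crux'' you flagged---a gradient analogue of Proposition~\ref{prop:intH1wtoL2}, of the form $\|\nabla(\bvarphi-\Pi_h\bvarphi)\|_{\bL^{p'}(\omega',T)}\lesssim h_T^{1-d/p'}\omega'(\mathcal{S}_T)^{1/p'}\|D^2\bvarphi\|_{\bL^{p'}(\mathcal{S}_T)}$---is indeed needed, but it is needed by the paper as well: inside Corollary~\ref{cor:Cea}'s best-approximation bound the paper invokes ``the interpolation estimates of Proposition~\ref{prop:intH1wtoL2}'' for precisely this gradient version, which comes from the machinery of \cite{NOS3}; so this is not a gap peculiar to your route. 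However, your prediction of its size is off: the gradient estimate carries the factor $h_T^{1-d/p'}\omega'(\mathcal{S}_T)^{1/p'}$, i.e.\ after conversion $h_T^{1+d/p}\omega(\mathcal{S}_T)^{-1/p}$---the \emph{same} order as the pressure contribution, not one power of $h$ better. The velocity terms are co-leading, not subordinate; this is harmless, since the common order is exactly the rate claimed in \eqref{eq:L2ErrorEstimate}, but the heuristic by which you dismissed them is wrong. Second, on the weight bookkeeping: passing from the local quantities $\omega(\mathcal{S}_T)^{-1/p}$ to the global $\omega(h)^{-1/p}$ requires $\omega(h)\lesssim\omega(\mathcal{S}_T)$ uniformly in $T$, so $\omega(h)$ must be understood as $\inf_{T\in\T}\omega(T)$; this is what the paper's own proof and its delta-source application effectively use (despite the displayed definition with a supremum), and with that reading your final summation closes.
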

\begin{proof}
We proceed in several steps on the basis of a duality argument. 
\begin{enumerate}[1.]
  \item We begin by recalling that, owing to Proposition~\ref{prop:MitreaWright}, for every $ t\in (1,2]$ we have that, if $\bg \in \bL^t(\Omega)$, the Stokes problem: find $(\bvarphi_\bg,\psi_\bg) \in \bW^{1,t}_0(\Omega) \times L^t(\Omega)/\R$
  \begin{equation}
  \label{eq:StokesDualWeak}
    \begin{dcases}
    a(\bvarphi_\bg,\bv) + b(\bv,\psi_\bg) =  \int_\Omega \bg \cdot \bv \diff x & \forall \bv \in \bW^{1,t'}_0(\Omega), \\
    b(\bvarphi_\bg,q) = 0 &\forall q \in L^{t'}(\Omega)/\R,
    \end{dcases}
  \end{equation}
  is well--posed, $(\bvarphi_\bg,\psi_\bg) \in \bW^{2,t}(\Omega) \times W^{1,t}(\Omega)$, and 
  %we have the estimate
  \begin{equation}
  \label{eq:dualitysmooth}
    \| \bvarphi_\bg \|_{\bW^{2,t}(\Omega)} + \| \psi_\bg \|_{W^{1,t}(\Omega)} \lesssim \| \bg \|_{\bL^t(\Omega)}.
  \end{equation}

  \item Since $p \geq2$ and $\omega \in A_p(\Omega)$ satisfies the compatibility condition of Proposition~\ref{prop:embedding} we can use the results of the previous step with $t = p'$ and the embedding results of Proposition \ref{prop:embedding} to conclude that 
  \[
   (\bvarphi_\bg, \psi_\bg) \in \bW^{2,p'}(\Omega) \cap \bW_0^{1,p'}(\omega',\Omega) \times W^{1,p'}(\Omega) \cap L^{p'}(\omega',\Omega)
  \]
  with an estimate.

  \item Let $\bg = |\ue - \ue_h|^{p-2}(\ue - \ue_h)$ and note that $\| \bg \|_{L^{p'}(\Omega)} = \| \ue - \ue_h \|_{\bL^p(\Omega)}^{p-1}$, which is finite given the assumption on $\omega$ and the embedding results of Proposition \ref{prop:embedding}.

  \item With this choice of $\bg$ fixed, we would like to set $\bv = \ue - \ue_h$ in \eqref{eq:StokesDualWeak} to obtain
  \begin{equation}
  \label{eq:DualityFirst}
     \| \ue - \ue_h \|_{\bL^p(\Omega)}^p = a( \ue - \ue_h, \bvarphi_\bg ) + b( \ue - \ue_h ,\psi_\bg).
  \end{equation}
  However, since $p \geq 2$, $\ue- \ue_h \notin \bW^{1,p}_0(\Omega)$ so that \eqref{eq:DualityFirst} must be justified by a density argument. Namely, let $\mathbf{w}_n \in \mathbf{C}_0^{\infty}(\Omega)$ be such that $\mathbf{w}_n \to \ue - \ue_h$ in  $\bW^{1,p}_0(\omega,\Omega)$. Since $\mathbf{w}_n \in \mathbf{C}_0^{\infty}(\Omega) \subset \bW^{1,p}_0(\Omega)$, we set $\bv = \mathbf{w}_n$ in \eqref{eq:StokesDualWeak} and arrive at
  \begin{equation}
  \label{eq:Dualitywn}
     a(\mathbf{w}_n,\boldsymbol{\varphi}_\bg) + b(\mathbf{w}_n,\psi_\bg) =  \int_\Omega |\ue - \ue_h|^{p-2}(\ue - \ue_h) \cdot \mathbf{w}_n \diff x.
  \end{equation}
  Now, since $\bvarphi_\bg \in \bW^{1,p'}_0(\omega',\Omega)$, 
  \[
   | a(\ue - \ue_h,\bvarphi_\bg) - a(\mathbf{w}_n,\bvarphi_\bg) | \leq \| \nabla \bvarphi_\bg \|_{\bL^{p'}(\omega',\Omega)}
    \| \nabla(\ue - \ue_h - \mathbf{w}_n) \|_{\bL^p(\omega,\Omega)} \to 0
  \]
  as $n \uparrow \infty$. Similar arguments reveal that $| b(\ue - \ue_h,\psi_\bg) - b(\mathbf{w}_n,\psi_\bg)| \to 0$ as $n \uparrow \infty$. Finally, in view of the continuous embedding $\bW^{1,p}_0(\omega,\Omega) \hookrightarrow \bL^p(\Omega)$, the right hand side of \eqref{eq:Dualitywn} converges to $ \|\ue - \ue_h \|_{\bL^p(\Omega)}^p$. These arguments yield \eqref{eq:DualityFirst}.
  
  \item From \eqref{eq:DualityFirst} and \eqref{eq:Stokesproj} we have, for an arbitrary pair $(\bw_h,r_h) \in \V_h \times \P_h$,
  \[
    \| \ue - \ue_h \|_{\bL^p(\Omega)}^p = a(\ue - \ue_h, \bvarphi_\bg - \bw_h  ) - b( \ue_h ,\psi_\bg - r_h) - b(\bw_h, \pi - \pi_h),
  \]
  where we also used that $\ue$ is solenoidal. Set now $\bw_h = \bvarphi_{\bg,h}$ and $r_h = \psi_{\bg,h}$, \ie the Stokes projection of $(\bvarphi_\bg,\psi_\bg)$. Galerkin orthogonality once again yields
  \[
    \| \ue - \ue_h \|^p_{\bL^p(\Omega)} = a(\ue , \boldsymbol{\varphi}_{\bg} - \boldsymbol{\varphi}_{\bg,h} ) + b(\boldsymbol{\varphi}_{\bg}-\boldsymbol{\varphi}_{\bg,h},\pe ). %+ b( \ue  ,\psi - \psi_{h}).
  \]
  Consequently
  \begin{equation*}
  \| \ue - \ue_h \|^p_{\bL^p(\Omega)} \lesssim 
%   \left( 
  \| \nabla( \bvarphi_{\bg} - \bvarphi_{\bg,h} ) \|_{\bL^{p'}(\omega',\Omega)} 
%   +\| \psi - \psi_{h} \|_{L^{p'}(\omega',\Omega)} \right)
%   \\
%   \cdot
   \left( \| \nabla \ue  \|_{\bL^p(\omega,\Omega)} + \| \pe  \|_{L^p(\omega,\Omega)} \right).
  \end{equation*}
%%%  \AJSc{There was an extra $b( \ue  ,\psi - \psi_{h})$ here, but since $\ue$ is solenoidal, this term $=0$ and there is no $\pi-\psi_h$ on the RHS}
  
  \item As a final step we must bound the first term on the right hand side of the previous estimate. Notice that, with $t = p'<2$, and $\varrho: = \omega'$ what we are trying to estimate is the error in the velocity component of the Stokes projection in $\bW^{1,t}_0(\varrho,\Omega) $. This means that, since $t<2$, we can apply Corollary~\ref{cor:Cea} provided condition \eqref{eq:condtionpandw} holds, that is 
  \[
   \rho' \in A_{t'}(\Omega) \Leftrightarrow (\omega')^{-t'/t} \in A_{p''}(\Omega) \Leftrightarrow (\omega^{-p'/p})^{-p/p'} \in A_p(\Omega) \Leftrightarrow \omega \in A_p(\Omega),
  \]
  and
  \[
    \varrho' \in A_1 \Leftrightarrow \left( \omega' \right)^{-t'/t} \in A_1 \Leftrightarrow \left( \omega^{-p'/p} \right)^{-p/p'} \in A_1 \Leftrightarrow \omega \in A_1,
  \]
  which is true by assumption. The best approximation result of Corollary~\ref{cor:Cea}, the interpolation estimates of Proposition~\ref{prop:intH1wtoL2}, and the regularity estimate given in \eqref{eq:dualitysmooth} then yield
  \[
    \| \nabla( \bvarphi_{\bg} - \bvarphi_{\bg,h} ) \|_{\bL^{p'}(\omega',\Omega)} 
%     +\| \psi - \psi_h \|_{L^{p'}(\omega',\Omega)}
    \lesssim h^{1-d/{p'}} \omega'(h)^{1/p'} \| \ue - \ue_h \|_{\bL^p(\Omega)}^{p-1}.
  \]
  Conclude by observing that, since $\omega \in A_p$, for we have that
  \[
    \omega'(T)^{1/p'} \lesssim h^d \omega(T)^{-1/p}, \quad \forall T \in \T.
  \]

\end{enumerate}
This concludes the proof
\end{proof}

\subsection{Application: The Stokes problem with delta sources}
\label{subsec:application}

Let us now, as an application, show how Theorem~\ref{thm:errest} can be applied to the case of singular forces described in item \ref{item1} of Section~\ref{sub:weights}. Assume that $\calZ \subset \Omega$ with $\# \calZ < \infty$, \ie it is a finite collection of points. We now define
\begin{equation}
\label{eq:fZ}
  \bef_\calZ = \sum_{z \in \calZ} \bF_z \delta_z,
\end{equation}
with $\bF_z \in \R^d$. We begin by establishing the suitable functional framework.

\begin{proposition}[$\bef_\calZ \in \bH^{-1}(\dist{\calZ}^{\alpha},\Omega)$]
\label{prop:dual}
Assume that $\alpha \in (d-2,d)$, then $\dist{\calZ}^{\alpha} \in A_2(\Omega)$, $\dist{\calZ}^{-\alpha} \in A_2(\Omega) \cap A_1$, and $\bef_\calZ \in \bH^{-1}(\dist{\calZ}^{\alpha},\Omega)$.
\end{proposition}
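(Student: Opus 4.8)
The plan is to establish the three claims in sequence, since the final membership $\bef_\calZ \in \bH^{-1}(\dist{\calZ}^{\alpha},\Omega)$ will rest on the weight memberships and a duality/embedding argument. First I would verify the two weight statements. For a single point $z$, item \eqref{item1} of Section~\ref{sub:weights} gives $\dist{z}^{\alpha} \in A_2$ precisely when $\alpha \in (-d,d)$; our hypothesis $\alpha \in (d-2,d)$ is a subinterval, so $\dist{z}^{\alpha} \in A_2$, and symmetrically $\dist{z}^{-\alpha} \in A_2$ since $-\alpha \in (-d, 2-d) \subset (-d,d)$. For the finite collection $\calZ$, I would argue that near each $z \in \calZ$ the weight $\dist{\calZ}^{\alpha}$ behaves like $\dist{z}^{\alpha}$ (the points are isolated, so on a small ball around each $z$ the nearest point of $\calZ$ is $z$ itself), while away from $\calZ$ the weight is smooth, bounded, and bounded away from zero; a standard localization/covering argument then promotes the single-point $A_2$ estimates to $\dist{\calZ}^{\alpha} \in A_2$ and $\dist{\calZ}^{-\alpha} \in A_2$. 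Because $\calZ$ is strictly interior, there is a neighborhood of $\partial\Omega$ on which $\dist{\calZ}^{\pm\alpha}$ is continuous and strictly positive, which is exactly Definition~\ref{def:ApOmega}; hence both weights lie in the restricted classes $A_2(\Omega)$.

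\smallskip

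Next I would establish $\dist{\calZ}^{-\alpha} \in A_1$. Using the characterization \eqref{eq:A1usingMax}, it suffices to check $\mathcal{M}(\dist{\calZ}^{-\alpha}) \lesssim \dist{\calZ}^{-\alpha}$ pointwise. Since $\alpha \in (d-2,d)$ forces $-\alpha \in (-d, 2-d)$, and in particular $-\alpha > -d$, the function $\dist{z}^{-\alpha}$ is locally integrable and its maximal function is controlled by itself with a constant depending only on $\alpha$ and $d$; this is the classical fact that $|x|^{-\beta} \in A_1$ for $0 \le \beta < d$, applied with $\beta = \alpha - (d - \text{dim}) $ localized near each point (here the codimension is $d$, so the admissible range is $-\alpha \in (-d,0]$, comfortably containing our interval). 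The finite-collection case again follows by localization around each $z \in \calZ$ together with the trivial bound away from $\calZ$.

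\smallskip

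For the final assertion $\bef_\calZ \in \bH^{-1}(\dist{\calZ}^{\alpha},\Omega) = \bW^{1,2}_0((\dist{\calZ}^{\alpha})',\Omega)'$, the plan is to bound the duality pairing $\langle \bef_\calZ, \bv \rangle = \sum_{z \in \calZ} \bF_z \cdot \bv(z)$ against $\| \nabla \bv \|_{\bL^2(\varrho,\Omega)}$, where $\varrho = (\dist{\calZ}^{\alpha})' = \dist{\calZ}^{-\alpha}$ is the conjugate weight within the $A_2$ class. The key is a pointwise-evaluation (Sobolev embedding into continuous functions) estimate for the weighted space $\bW^{1,2}_0(\varrho,\Omega)$: I would show $|\bv(z)| \lesssim \| \nabla \bv \|_{\bL^2(\varrho,\Omega)}$. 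Since $\varrho = \dist{\calZ}^{-\alpha}$ vanishes at $z$ of order $\alpha > d-2$, functions in the weighted space are forced to be continuous at $z$ and their value there is controlled; this is where I expect the main obstacle, as one must verify the weighted Morrey/embedding condition. Concretely, I would invoke the embedding machinery (Proposition~\ref{prop:embedding} or the weighted Sobolev embedding into $C^0$) using that the doubling exponent of $\varrho$ near $z$ together with $p=2$ satisfies the required inequality; the condition $\alpha > d-2$ is exactly what makes $-d-2 + (d+\alpha) > 0$, i.e.\ guarantees point values are finite. Summing the finitely many contributions $|\bF_z|\,|\bv(z)| \lesssim \| \nabla \bv \|_{\bL^2(\varrho,\Omega)}$ then yields the boundedness of the functional and hence $\bef_\calZ \in \bH^{-1}(\dist{\calZ}^{\alpha},\Omega)$.
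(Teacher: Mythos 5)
Your handling of the two weight memberships is correct and in substance what the paper does (the paper simply reads them off from the admissible range of exponents in \eqref{distance_A2}; your localization remarks fill in the same facts). The issue is the third claim, which is the heart of the proposition, and there your argument as written has a genuine gap. The paper proves $\bef_\calZ \in \bH^{-1}(\dist{\calZ}^{\alpha},\Omega)$ by citing a ready-made potential-theoretic criterion, \cite[Remark 21.19]{MR2305115}: a compactly supported Radon measure $\nu$ belongs to the dual of $H^1_0(\omega,\Omega)$ provided $\int_\Omega\int_0^r t^{2}\nu(B(x,t))\,\omega(B(x,t))^{-1}\,t^{-1}\diff t \diff\nu(x)<\infty$; taking $\nu=\sum_{z\in\calZ}\delta_z$ and $\omega=\dist{\calZ}^{-\alpha}$, so that $\omega(B(z,t))\approx t^{d-\alpha}$, this reduces to $\sum_{z\in\calZ}\int_0^r t^{1-d+\alpha}\diff t<\infty$, i.e.\ exactly $\alpha>d-2$. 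Your plan is to prove boundedness of point evaluation, $|\bv(z)|\lesssim\|\GRAD\bv\|_{\bL^2(\dist{\calZ}^{-\alpha},\Omega)}$, which is the same statement in disguise (the criterion specialized to a Dirac mass \emph{is} boundedness of point evaluation), but you never establish it. The tool you point to cannot do the job: Proposition~\ref{prop:embedding} gives only $W^{1,2}(\omega,\Omega)\hookrightarrow L^2(\Omega)$, an embedding into a Lebesgue space, not into continuous functions, so it yields no control whatsoever on $\bv(z)$; you acknowledge this step as the main obstacle and then leave it open. Moreover your exponent bookkeeping is off: $-d-2+(d+\alpha)>0$ reads $\alpha>2$, not $\alpha>d-2$; the relevant computation uses $\varrho(B(z,t))\approx t^{d-\alpha}$ (the test-function weight is $\dist{\calZ}^{-\alpha}$, not $\dist{\calZ}^{\alpha}$), which gives the condition $2-d+\alpha>0$.

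The gap is fillable by an elementary argument, which would make your route self-contained and genuinely independent of the citation the paper uses. For $\bv$ smooth and compactly supported and $z\in\calZ$, the standard representation formula and a weighted Cauchy--Schwarz inequality give
\[
|\bv(z)| \lesssim \int_{\Omega} \frac{|\GRAD\bv(x)|}{|x-z|^{d-1}}\diff x
\leq \left( \int_{\Omega} |x-z|^{-\alpha}\, |\GRAD \bv(x)|^2 \diff x\right)^{1/2}
\left( \int_{\Omega} |x-z|^{\alpha - 2(d-1)}\diff x \right)^{1/2},
\]
and the last factor is finite if and only if $\alpha-2(d-1)>-d$, i.e.\ $\alpha>d-2$. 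Since $\dist{\calZ}(x)\leq|x-z|$ and $\alpha>0$, one has $|x-z|^{-\alpha}\leq\dist{\calZ}(x)^{-\alpha}$, so the first factor is bounded by $\|\GRAD\bv\|_{\bL^2(\dist{\calZ}^{-\alpha},\Omega)}$; summing over the finitely many $z\in\calZ$ and using density of smooth functions in $\bW^{1,2}_0(\dist{\calZ}^{-\alpha},\Omega)$ shows that $\bv\mapsto\sum_{z\in\calZ}\bF_z\cdot\bv(z)$ extends to a bounded functional, i.e.\ $\bef_\calZ\in\bH^{-1}(\dist{\calZ}^{\alpha},\Omega)$. With this computation (or with the citation of \cite{MR2305115}) inserted, your proof closes; without it, the central claim of the proposition is only restated, not proven.
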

\begin{proof}
The bounds on $\alpha$ guarantee that $\dist{\calZ}^{\alpha} \in A_2(\Omega)$ and $\dist{\calZ}^{-\alpha} \in A_2(\Omega)$. In addition, since $d-2\geq 0$, we have that $\dist{\calZ}^{-\alpha} \in A_1$.

Now, owing to \cite[Remark 21.19]{MR2305115}, a compactly supported Radon measure $\nu$ belongs to the dual of $H^1_0(\omega,\Omega)$ if
\[
  \int_\Omega \int_0^r \frac{t^2\nu(B(x,t))}{\omega(B(x,t))} \frac{\diff t}t \diff \nu(x) < \infty
\]
for some $r>0$. Setting $\nu = \sum_{z \in \calZ} \delta_z$ and $\omega = \dist{\calZ}^{-\alpha}$ we get
\[
  \int_\Omega \int_0^r \frac{t^2\nu(B(x,t))}{\omega(B(x,t))} \frac{\diff t}t \diff \nu(x) \lesssim \sum_{z \in \calZ} \int_0^r \frac{t}{t^{d-\alpha}} \diff t,
\]
which is finite provided $d-2<\alpha$. 
\end{proof}

The previous result shows that, if $\bef = \bef_\calZ$ in \eqref{eq:StokesStrong}, then this problem has a unique solution $(\ue,\pe) \in \bH^1_0(\dist{\calZ}^\alpha,\Omega) \times L^2(\dist{\calZ}^\alpha,\Omega)/\R$. The following result is the missing ingredient to obtain error estimates via Theorem~\ref{thm:errest}.

\begin{proposition}[embedding]
\label{prop:embeddingdelta}
If $\alpha \in (d-2,2)$, then $\bH^1_0(\dist{z}^{\alpha},\Omega) \hookrightarrow \bL^2(\Omega)$.
\end{proposition}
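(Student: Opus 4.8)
The plan is to deduce the embedding directly from the weighted Sobolev inequality of Proposition~\ref{prop:embedding}, applied with $p = 2$ and $\omega = \dist{z}^\alpha$. The first thing to settle is membership in the correct Muckenhoupt class. By item~\ref{item1} of Section~\ref{sub:weights}, $\dist{z}^\alpha \in A_2$ precisely when $\alpha \in (-d,d)$; since $d \in \{2,3\}$, the hypothesis $\alpha \in (d-2,2)$ gives $-d < d-2 < \alpha < 2 \leq d$, so $\omega \in A_2$ and Proposition~\ref{prop:embedding} is applicable. It then remains to verify the dimensional compatibility condition required there, namely that
\[
  \frac{r^{d+2}}{R^{d+2}}\,\frac{\omega(B(x,R))}{\omega(B(x,r))} \lesssim 1
\]
uniformly in $x \in \Omega$ and $0 < r \leq R$.

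The main step, and the one I expect to be the only real obstacle, is a two-sided estimate for the weighted measure of balls. Writing $\delta = |x - z|$ and splitting $\int_{B(x,r)} |y-z|^\alpha \diff y$ according to whether $z$ is near or far from $B(x,r)$, I would prove the comparison
\[
  \omega(B(x,r)) \approx r^d\,(r + \delta)^\alpha.
\]
When $\delta \lesssim r$ the ball sits inside $B(z,Cr)$, and integrating $|y-z|^\alpha$ in polar coordinates about $z$ yields $\omega(B(x,r)) \approx r^{d+\alpha}$; this uses $\alpha + d > 0$, which holds since $\alpha > d-2 \geq 0$. When $\delta \gtrsim r$, the quantity $|y-z|$ is comparable to $\delta$ throughout $B(x,r)$, so $\omega(B(x,r)) \approx r^d \delta^\alpha$. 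The two regimes are unified by the displayed estimate.

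With this estimate in hand the verification is immediate. Substituting gives
\[
  \frac{r^{d+2}}{R^{d+2}}\,\frac{\omega(B(x,R))}{\omega(B(x,r))} \approx \left( \frac{r}{R} \right)^{2} \left( \frac{R + \delta}{r + \delta} \right)^{\alpha}.
\]
Since $r \leq R$ and (for $d \in \{2,3\}$) $\alpha > 0$, the ratio $(R+\delta)/(r+\delta)$ is nonincreasing in $\delta$ and hence maximized at $\delta = 0$, where it equals $R/r$; thus the right-hand side is bounded by $(r/R)^{2-\alpha}$. The upper restriction $\alpha < 2$ makes the exponent $2 - \alpha$ positive, and since $r/R \leq 1$ the whole expression is $\lesssim 1$. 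This is exactly where $\alpha < 2$ enters, and it is the crux of the argument. Proposition~\ref{prop:embedding} then delivers $W^{1,2}(\dist{z}^\alpha,\Omega) \hookrightarrow L^2(\Omega)$, hence in particular $H^1_0(\dist{z}^\alpha,\Omega) \hookrightarrow L^2(\Omega)$; applying this componentwise yields the asserted vector-valued embedding $\bH^1_0(\dist{z}^\alpha,\Omega) \hookrightarrow \bL^2(\Omega)$.
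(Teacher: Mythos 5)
Your proof is correct and takes essentially the same route as the paper: both verify the ball-measure ratio condition of Proposition~\ref{prop:embedding} for $\omega = \dist{z}^{\alpha}$ with $p=2$ and conclude via that embedding. You are in fact more careful than the paper, whose one-line computation asserts $\dist{z}^{\alpha}(B(x,r)) \approx r^{d+\alpha}$ uniformly in $x$ (a two-sided comparison that fails when $|x-z| \gg r$); your estimate $\omega(B(x,r)) \approx r^d\,(r+|x-z|)^{\alpha}$ together with the monotonicity argument in $\delta = |x-z|$ shows the worst case is indeed $\delta = 0$, which is exactly what makes the paper's shorthand legitimate.
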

\begin{proof}
We only need to verify the condition of Proposition~\ref{prop:embedding}. In this case, we have
\[
  \frac{r^{2+d}}{R^{2+d}} \frac{\dist{z}^{\alpha}(B(x,R))}{\dist{z}^{\alpha}(B(x,r))} \approx \frac{r^{2+d}}{R^{2+d}}\frac{ R^{d+\alpha}}{r^{d+\alpha}} = \left(\frac{r}{R}\right)^{2-\alpha}.
\]
The provided bounds on $\alpha$ guarantee that this ratio is uniformly bounded.
\end{proof}

We can now obtain an error estimate. Notice that since $\dist{\calZ}^{-\alpha} \in A_2(\Omega) \cap A_1$, the results of Theorem \ref{thm:weightstab} and Corollary \ref{cor:Cea} apply.

\begin{corollary}[error estimate]
Let $\alpha \in (d-2,2)$ and $(\ue,\pe) \in \bH^1_0(\dist{\calZ}^\alpha,\Omega) \times L^2(\dist{\calZ}^\alpha,\Omega)/\R$ solve \eqref{eq:StokesStrong} with $\bef = \bef_\calZ$. Let $(\ue_h,\pe_h)$ be the finite element approximation of $(\ue,\pe)$. In the setting of Theorem~\ref{thm:errest} we have, for every $\vare >0$,
\[
  \| \ue - \ue_h\|_{\bL^2(\Omega)} \lesssim h^{2-d/2-\vare} \left( \| \nabla \ue  \|_{\bL^2(\dist{\calZ}^\alpha,\Omega)} + \| \pe  \|_{L^2(\dist{\calZ}^{\alpha},\Omega)} \right),
\]
where the hidden constant does not depend on $\ue$, $\pe$, nor $h$, but blows up as $\vare \downarrow 0$.
\end{corollary}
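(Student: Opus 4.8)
The plan is to invoke Theorem~\ref{thm:errest} with $p=2$ and the weight $\omega=\dist{\calZ}^{\alpha}$, and then to read off the mesh--dependent factor $\omega(h)$ from the elements that touch $\calZ$. Before doing so I would check the hypotheses. Since $P>2$, the index $p=2$ lies in $[2,P)$; since $\alpha\in(d-2,2)\subset(d-2,d)$, Proposition~\ref{prop:dual} gives $\dist{\calZ}^{\alpha}\in A_2(\Omega)$ and $\dist{\calZ}^{-\alpha}\in A_2(\Omega)\cap A_1$, which is exactly the alternative in \eqref{eq:condtionpandw} that is admissible at $p=2$. The remaining requirement, the compatibility condition of Proposition~\ref{prop:embedding}, is the embedding $\bH^1_0(\dist{\calZ}^{\alpha},\Omega)\hookrightarrow\bL^2(\Omega)$; for a finite set $\calZ$ this follows from the single--point computation of Proposition~\ref{prop:embeddingdelta} carried out near each $z\in\calZ$, the weight being continuous and bounded away from $\calZ$. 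With all hypotheses met, Theorem~\ref{thm:errest} yields
\begin{equation*}
  \|\ue-\ue_h\|_{\bL^2(\Omega)} \lesssim h^{1+d/2}\,\omega(h)^{-1/2}\left(\|\nabla\ue\|_{\bL^2(\dist{\calZ}^{\alpha},\Omega)}+\|\pe\|_{L^2(\dist{\calZ}^{\alpha},\Omega)}\right).
\end{equation*}

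The decisive step is the evaluation of $\omega(h)$. As $\alpha>0$, the weight $\dist{\calZ}^{\alpha}$ vanishes only at the points of $\calZ$, so the relevant (smallest) elementwise mass is attained on an element $T_z\ni z$, $z\in\calZ$; by quasiuniformity,
\begin{equation*}
  \omega(T_z)=\int_{T_z}\dist{\calZ}^{\alpha}\diff x\approx\int_{|y|\lesssim h}|y|^{\alpha}\diff y\approx h^{d+\alpha}.
\end{equation*}
Substituting $\omega(h)\approx h^{d+\alpha}$ gives the rate $h^{1+d/2}\,\omega(h)^{-1/2}\approx h^{1-\alpha/2}$.

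To match the statement I would exploit that the singular solution in fact lies in $\bH^1_0(\dist{\calZ}^{\beta},\Omega)\times L^2(\dist{\calZ}^{\beta},\Omega)/\R$ for every $\beta\in(d-2,2)$, since the asymptotics \eqref{asympt-x0} give $\int_B\dist{\calZ}^{\beta}|\nabla\ue|^2\diff x\approx\int_0^R r^{\beta+1-d}\diff r<\infty$ precisely when $\beta>d-2$. Hence, given $\vare>0$ small, I would run the above with $\alpha=d-2+2\vare$, for which $1-\alpha/2=2-d/2-\vare$, producing the claimed exponent. The weighted norms on the right are finite for each such $\alpha$ but diverge as $\alpha\downarrow d-2$; together with the degeneration of the embedding and $A_2$ constants of $\dist{\calZ}^{\alpha}$ in the same limit, this is the source of the blow--up of the hidden constant as $\vare\downarrow0$.

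The only genuinely delicate point, and the one I would be most careful about, is identifying which elements control $\omega(h)$: it is the elements at the singular set $\calZ$, where $\omega(T_z)\approx h^{d+\alpha}$, and not the elements far from $\calZ$, where $\omega(T)\approx h^{d}$. Reading the factor off the far elements would yield only the flat, $\alpha$--insensitive rate $h$ and lose the sharp behaviour; tracing the proof of Theorem~\ref{thm:errest}, where the elementwise inequality $\omega'(T)^{1/p'}\lesssim h^{d}\omega(T)^{-1/p}$ is used, confirms that the minimal elementwise mass is what enters, consistent with the computation above.
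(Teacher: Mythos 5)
Your proposal is correct and takes essentially the same route as the paper's proof: check the hypotheses of Theorem~\ref{thm:errest} via Propositions~\ref{prop:dual} and~\ref{prop:embeddingdelta}, apply the theorem with $\omega = \dist{\calZ}^{\alpha}$, evaluate $\omega(h) \approx h^{d+\alpha}$ on the elements containing points of $\calZ$ so that $h^{1+d/2}\omega(h)^{-1/2} \approx h^{1-\alpha/2}$, and reach the exponent $2-d/2-\vare$ by working with $\alpha = d-2+2\vare$. You are in fact more explicit than the paper on the two points it leaves implicit: that $\omega(h)$ must be read as the minimal elementwise mass at the singular set (despite being defined as a supremum in Section~\ref{sec:error_estimates}), and that the stated $\vare$-rate results from choosing the weight exponent close to $d-2$, with the attendant blow-up of the hidden constant.
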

\begin{proof}
Proposition~\ref{prop:dual} guarantees that there is a unique pair $(\ue,\pe) \in \bH^1_0(\dist{\calZ}^\alpha,\Omega) \times L^2(\dist{\calZ}^\alpha,\Omega)/\R$ that solves \eqref{eq:StokesStrong}. In addition, Proposition~\ref{prop:embeddingdelta} guarantees that $\ue \in \bL^2(\Omega)$. The rest is just an application of Theorem~\ref{thm:errest}. In this case, we have that
\[
  h^{1+d/2}\omega(h)^{-1/2} = h^{1+d/2} h^{-d/2-\alpha/2} = h^{1-\alpha/2}
\]
and
\[
  \alpha \in (d-2,2) \implies 1-\frac\alpha2 \in \left(0,2-\frac{d}2\right).
\]
The blowup of the constants is due to the fact that in the limiting case the embedding $\bH^1_0(\dist{z}^{\alpha},\Omega) \hookrightarrow \bL^2(\Omega)$ does no longer hold.
\end{proof}

We conclude by commenting that via similar techniques we can consider the cases described in items \ref{item2} and \ref{item3} of Section~\ref{sub:weights}.

\section{Examples of suitable pairs}
\label{sec:Pairs}

To conclude our analyisis, we study some  pairs that satisfy assumptions \eqref{eq:infsup_discrete}, \eqref{eq:bG-bGT} so that the theory we have presented above applies.
%\AJSc{So far two, maybe we introduce some others? Discontinuous pressure?}

We begin with a continuous weighted inf--sup condition that immediately follows from the existence of a right inverse of the divergence.

\begin{lemma}[continuous weighted inf--sup]
Let $p \in (1,\infty)$ and $\omega \in A_p$. For all $q \in L^{p'}(\omega',\Omega)/\R$ we have that
\begin{equation}
\label{eq:InfSupContinuous}
  \| q \|_{ L^{p'}(\omega',\Omega)} \lesssim \sup_{\bv \in \bW^{1,p}_0(\omega,\Omega)} \frac{b(\bv, q)}{\| \nabla \bv \|_{\bL^p(\omega,\Omega)}},
\end{equation}
where the hidden constant depends only on $\Omega$ and $[\omega]_{A_p}$, but not on $q$.
\end{lemma}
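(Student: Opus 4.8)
The plan is to establish the weighted inf--sup condition \eqref{eq:InfSupContinuous} by exhibiting, for each $q \in L^{p'}(\omega',\Omega)/\R$, a velocity field $\bv \in \bW^{1,p}_0(\omega,\Omega)$ that realizes (up to a constant) the supremum. The natural candidate is the solution of the divergence equation $\DIV \bv = q$ (with $\bv$ vanishing on $\partial\Omega$), since plugging such a $\bv$ into the numerator gives $b(\bv,q) = -\int_\Omega q \DIV \bv \diff x = -\int_\Omega q^2\diff x$ — wait, more precisely one takes $\DIV \bv = -|q|^{p'-2}q \,\omega'$ scaled appropriately, or works with a duality pairing so that the numerator equals $\|q\|_{L^{p'}(\omega',\Omega)}^{p'}$ up to normalization. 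The key fact I would invoke is the existence of a \emph{weighted} right inverse of the divergence operator: a bounded linear operator solving $\DIV \bv = g$ on convex (or John) domains with the estimate $\|\nabla \bv\|_{\bL^p(\omega,\Omega)} \lesssim \|g\|_{L^p(\omega,\Omega)}$ whenever $\omega \in A_p$, with constant depending only on $\Omega$ and $[\omega]_{A_p}$.

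First I would fix $q \in L^{p'}(\omega',\Omega)/\R$ and observe that $\int_\Omega q \diff x = 0$ (the mean--zero condition is exactly what makes the divergence equation solvable). Next, I would set $g = q$ and invoke the weighted divergence result to produce $\bv \in \bW^{1,p'}_0(\omega',\Omega)$ solving $\DIV \bv = q$ with $\|\nabla\bv\|_{\bL^{p'}(\omega',\Omega)} \lesssim \|q\|_{L^{p'}(\omega',\Omega)}$ — but I must be careful: the test field in the supremum lives in $\bW^{1,p}_0(\omega,\Omega)$, while $q \in L^{p'}(\omega',\Omega)$, so the correct pairing is the conjugate one. The cleanest route is a duality argument: write
\[
  \|q\|_{L^{p'}(\omega',\Omega)} = \sup_{g} \frac{\int_\Omega q g \diff x}{\|g\|_{L^p(\omega,\Omega)}},
\]
where the supremum runs over mean--zero $g \in L^p(\omega,\Omega)$, then for each such $g$ solve $\DIV \bv = g$ with $\bv \in \bW^{1,p}_0(\omega,\Omega)$ and $\|\nabla\bv\|_{\bL^p(\omega,\Omega)} \lesssim \|g\|_{L^p(\omega,\Omega)}$. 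Since $b(\bv,q) = -\int_\Omega q\DIV\bv\diff x = -\int_\Omega q g \diff x$, this gives
\[
  \frac{|b(\bv,q)|}{\|\nabla\bv\|_{\bL^p(\omega,\Omega)}} \gtrsim \frac{\left|\int_\Omega q g \diff x\right|}{\|g\|_{L^p(\omega,\Omega)}},
\]
and taking the supremum over $g$ recovers $\|q\|_{L^{p'}(\omega',\Omega)}$.

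The main obstacle, and the crux of the argument, is the weighted solvability of the divergence equation with the $A_p$--dependent constant. On a convex polytope one can appeal to the construction of Dur\'an and collaborators (the weighted a priori estimate for the divergence operator referenced in the excerpt, \cite{MR1880723}), which yields a right inverse bounded on $\bL^p(\omega,\Omega)$ for $\omega \in A_p$ with constant depending only on $\Omega$ and $[\omega]_{A_p}$; convexity (or the John domain property) is what makes this available on the nonsmooth domain. Once this weighted divergence estimate is in hand, the remaining steps — verifying the mean--zero condition, passing through the $L^{p'}(\omega',\Omega)$--$L^p(\omega,\Omega)$ duality, and tracking that the constant depends only on $\Omega$ and $[\omega]_{A_p}$ — are routine. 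I would close by noting that the dependence of the constant on $[\omega]_{A_p}$ is inherited directly from the divergence estimate, which gives exactly the claimed uniformity.
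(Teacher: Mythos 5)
Your proposal is correct and takes essentially the same approach as the paper: both arguments reduce the inf--sup condition to the existence of a right inverse of the divergence, bounded from mean--zero data in $L^p(\omega,\Omega)$ to $\bW^{1,p}_0(\omega,\Omega)$ for $\omega \in A_p$, with constant controlled by $[\omega]_{A_p}$. The only difference is cosmetic: the paper implements the $L^p(\omega,\Omega)$--$L^{p'}(\omega',\Omega)$ duality concretely, constructing the extremal function $\tilde r = \omega' |q|^{p'/p}\signum(q)$, correcting its mean, and solving $\DIV \bw = r$ once for that datum, whereas you invoke the abstract duality over all mean--zero $g \in L^p(\omega,\Omega)$ and solve the divergence equation for each such $g$.
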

\begin{proof}
Let $q \in L^{p'}(\omega',\Omega)/\R$ and we define $\tilde r = \omega' |q|^{p'/p}\signum(q)$. Notice that
\[
 \| \tilde r\|^p_{L^{p}(\omega,\Omega)} = \int_{\Omega} \omega |\tilde r|^p \diff x =  \int_{\Omega} \omega^{1-p'} |q|^{p'} \diff x = \| q \|_{L^{p'}(\omega',\Omega)}^{p'},
\]
so that $\tilde r \in L^p(\omega,\Omega)$ and, since $\Omega$ is bounded $\tilde r \in L^1(\Omega)$. Consequently, we can set $r = \tilde r - \fint_\Omega \tilde r \diff x$ and we conclude that $r \in L^p(\omega,\Omega)/\R$ with
\[
  \| r \|_{L^p(\omega,\Omega)} \lesssim \| q \|_{L^{p'}(\omega',\Omega)}^{p'-1}.
\]
Our final initial observation is that, since $q$ has zero mean,
\[
  \int_\Omega q r \diff x = \int_\Omega q \tilde r \diff x = \int_\Omega |q|^{p'} \omega' \diff x= \| q \|_{L^{p'}(\omega',\Omega)}^{p'}.
\]

Recall now that there is $\bw \in \bW^{1,p}_0(\omega,\Omega)$ such that
\[ 
  \DIV \bw = r, \qquad \| \nabla \bw \|_{\bL^p(\omega,\Omega)} \lesssim \| r \|_{L^p(\omega,\Omega)},
\]
where the constant in the estimate is independent of $r$; see \cite[Theorem 3.1]{MR2731700}, \cite[Theorem 1]{MR2548872}, \cite[Theorem 5.2]{MR2643399}, or \cite[Theorem 2.8]{MR3618122} for a proof. As a consequence, we have
\begin{equation*}
\begin{aligned}
 \sup_{\bv \in \bW_0^{1,p}(\omega,\Omega)} \frac{b(\bv, q)}{\| \nabla \bv \|_{\bL^p(\omega,\Omega)}} & \geq \frac{b(\bw, q)}{\| \nabla \bw \|_{\bL^p(\omega,\Omega)}} =  \frac{\| q \|^{p'}_{ L^{p'}(\omega',\Omega) } }{\| \nabla \bw \|_{\bL^p(\omega,\Omega)}}
 \\
 & \gtrsim  \frac{\| q \|^{p'}_{ L^{p'}(\omega',\Omega) } }{\| r \|_{L^p(\omega,\Omega)}} \gtrsim  \| q \|_{ L^{p'}(\omega',\Omega)}.
 \end{aligned}
\end{equation*}
As we intended to show.
\end{proof}

\subsection{The mini element}
\label{sub:minipair}

This pair is considered in \cite{MR799997}, \cite[Section 4.2.4]{MR2050138} for the unweighted case and it is defined by:
\begin{align}
\label{eq:mini_V}
\V_h & = \left\{  \bv_h \in \mathbf{C}(\bar \Omega):\ \forall T \in \T, \bv_h|_{T} \in [\mathbb{P}_1(T) \oplus \mathbb{B}(T)]^d \right\} \cap \bH_0^1(\Omega),
\\
\label{eq:mini_P}
\P_h & = \left\{  q_h \in  L^2(\Omega)/\R \cap C(\bar \Omega):\ \forall T \in \T, q_h|_{T} \in \mathbb{P}_1(T) \right\},
\end{align}
where $\mathbb{B}(T)$ denotes the space spanned by local bubble functions.

We must immediately note that, for $d \in \{2,3\}$, assumption \eqref{eq:bG-bGT} is proved in \cite[Theorem 12]{MR3422453} and \cite[Theorem 8.1]{MR2121575}. Thus, we focus on the weighted LBB condition \eqref{eq:infsup_discrete}. This will be obtained with the aid of the, auxiliary, continuous inf--sup condition \eqref{eq:InfSupContinuous}. 

\begin{theorem}[discrete inf--sup condition]
\label{thm:infsup_discreteb}
Let $p \in (1,\infty)$ and $\omega \in A_p$. If $\V_h$ and $\P_h$ are defined by \eqref{eq:mini_V} and \eqref{eq:mini_P}, respectively, then we have that
\begin{equation}
\label{eq:infsup_discreteb}
\beta  \| q_h \|_{L^{p'}(\omega',\Omega)} \leq \sup_{ \bv_h \in \V_h } \frac{b(\bv_h, q_h)}{ \| \nabla \bv_h \|_{\bL^p(\omega,\Omega)}  } \quad \forall q_h \in \P_h,
\end{equation}
where the hidden constant is independent of $\T$.
\end{theorem}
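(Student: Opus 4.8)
The plan is to establish \eqref{eq:infsup_discreteb} by constructing a weighted Fortin operator and feeding it into the continuous inf--sup condition \eqref{eq:InfSupContinuous}. Concretely, it suffices to exhibit an operator $\Pi_F : \bW^{1,p}_0(\omega,\Omega) \to \V_h$ that (i) preserves the divergence tested against $\P_h$, i.e.\@\xspace $b(\bv - \Pi_F\bv, q_h) = 0$ for all $q_h \in \P_h$, and (ii) is stable, $\| \nabla \Pi_F\bv \|_{\bL^p(\omega,\Omega)} \lesssim \| \nabla\bv \|_{\bL^p(\omega,\Omega)}$, with a constant depending only on $[\omega]_{A_p}$, $p$, $\Omega$, and the shape regularity of $\Tr$. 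Granted such an operator, the standard Fortin argument yields, for every $q_h \in \P_h$,
\[
  \| q_h \|_{L^{p'}(\omega',\Omega)} \lesssim \sup_{\bv \in \bW^{1,p}_0(\omega,\Omega)} \frac{b(\bv,q_h)}{\| \nabla\bv \|_{\bL^p(\omega,\Omega)}} = \sup_{\bv} \frac{b(\Pi_F\bv,q_h)}{\| \nabla\bv \|_{\bL^p(\omega,\Omega)}} \lesssim \sup_{\bv_h \in \V_h}\frac{b(\bv_h,q_h)}{\| \nabla\bv_h \|_{\bL^p(\omega,\Omega)}},
\]
where the first bound is \eqref{eq:InfSupContinuous}, the identity uses property (i), and the last step uses property (ii) together with the observation that the supremum may be restricted to test functions with $b(\bv,q_h) \geq 0$ (replacing $\bv$ by $-\bv$ changes neither the norm nor this restriction).

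To build $\Pi_F$ I would set $\Pi_F\bv = \Pi_{\V_h}\bv + \mathbf{C}\bv$, where $\Pi_{\V_h}$ is the quasi--interpolant of Section~\ref{subsec:quasi} onto the continuous piecewise linear component of $\V_h$, and $\mathbf{C}\bv$ is an elementwise bubble correction. Writing $\phi_T$ for the scalar bubble supported on $T$, I set $(\mathbf{C}\bv)|_T = \mathbf{c}_T\,\phi_T$ with $\mathbf{c}_T \in \R^d$ chosen so that the elementwise mean of $\bv - \Pi_F\bv$ vanishes,
\[
  \mathbf{c}_T \int_T \phi_T \diff x = \int_T \left( \bv - \Pi_{\V_h}\bv \right) \diff x ,
\]
which is uniquely solvable since $\int_T\phi_T \diff x \approx h_T^d$. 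Property (i) is then immediate: as $q_h \in \P_h$ is continuous and piecewise linear, $\nabla q_h$ is piecewise constant, so integrating by parts on the boundary--vanishing field $\bv - \Pi_F\bv$ gives $b(\bv-\Pi_F\bv, q_h) = \sum_{T \in \T} \nabla q_h|_T \cdot \int_T(\bv - \Pi_F\bv)\diff x = 0$ by construction.

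The hard part will be the weighted stability of the bubble correction. On each $T$, an inverse estimate gives $\| \nabla\phi_T \|_{L^\infty(T)} \lesssim h_T^{-1}$, whence $\| \nabla(\mathbf{c}_T\phi_T) \|_{\bL^p(\omega,T)}^p \lesssim |\mathbf{c}_T|^p h_T^{-p}\omega(T)$, while the defining relation together with a weighted H\"older inequality yields
\[
  |\mathbf{c}_T| \lesssim h_T^{-d}\int_T |\bv - \Pi_{\V_h}\bv|\diff x \lesssim h_T^{-d}\, \| \bv - \Pi_{\V_h}\bv \|_{L^p(\omega,T)}\,\omega'(T)^{1/p'},
\]
where I used $\omega^{-p'/p} = \omega'$. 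Invoking the interpolation estimate \eqref{eq:interpolation_estimate_1}, $\| \bv-\Pi_{\V_h}\bv \|_{L^p(\omega,T)}\lesssim h_T\| \nabla\bv \|_{\bL^p(\omega,\mathcal{S}_T)}$, and assembling the bounds leads to
\[
  \| \nabla(\mathbf{c}_T\phi_T) \|_{\bL^p(\omega,T)}^p \lesssim h_T^{-dp}\,\omega(T)\,\omega'(T)^{p-1}\,\| \nabla\bv \|_{\bL^p(\omega,\mathcal{S}_T)}^p.
\]
The decisive point is the local $A_p$ balance $\omega(T)\,\omega'(T)^{p-1} \lesssim [\omega]_{A_p}\,|T|^p \approx h_T^{dp}$, which follows by applying the definition \eqref{A_pclass} on a ball comparable to $T$ and using quasiuniformity; it cancels the factor $h_T^{-dp}$ exactly, so the local contribution is controlled by $\| \nabla\bv \|_{\bL^p(\omega,\mathcal{S}_T)}^p$ with a constant depending only on $[\omega]_{A_p}$.

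Summing over $T \in \T$ and using the finite overlap of the patches $\{\mathcal{S}_T\}$ furnishes the global stability of $\mathbf{C}$; combined with the stability \eqref{eq:local_stability} of $\Pi_{\V_h}$ this delivers property (ii), and hence \eqref{eq:infsup_discreteb}. I expect the genuine obstacle to be precisely this third step, namely tracking the weighted norms through the bubble correction and recognizing that the product $\omega(T)\omega'(T)^{p-1}$ is exactly the quantity controlled by the Muckenhoupt characteristic; the remaining ingredients are routine. I note finally that running the identical argument with the roles of $p$ and $\omega$ replaced by $p'$ and $\omega'$ produces the companion estimate in \eqref{eq:infsup_discrete}, since the hypotheses $p \in (1,\infty)$ and $\omega \in A_p$ are symmetric under conjugation.
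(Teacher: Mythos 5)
Your proof is correct and follows essentially the same route as the paper: the continuous weighted inf--sup condition \eqref{eq:InfSupContinuous} combined with a Fortin operator built from the quasi--interpolant $\Pi_h$ plus elementwise bubble corrections, with stability hinging on exactly the local $A_p$ balance $\omega(T)\,\omega'(T)^{p-1} \lesssim [\omega]_{A_p} |T|^p$ that you identify as the decisive point. The only cosmetic differences are that you sum $p$-th powers using the disjoint supports of the bubbles where the paper sums local norms, and you spell out the Fortin-lemma deduction that the paper delegates to a citation.
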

\begin{proof}
Our argument will be based on \eqref{eq:InfSupContinuous} and the construction of a so--called Fortin operator \cite[Lemma 4.19]{MR2050138}. Given $\bv \in \mathbf{W}_0^{1,p}(\omega,\Omega)$, we will construct $\mathcal{F}_h \bv \in \V_h$ such that
\begin{equation}
\label{eq:Fortin_aux_1}
  b(\bv,q_h) = b(\mathcal{F}_h \bv,q_h) \quad \forall q_h \in \P_h, 
 \quad
 \| \nabla \mathcal{F}_h \bv \|_{\mathbf{L}^p(\omega,\Omega)} \lesssim \|  \nabla \bv \|_{\mathbf{L}^p(\omega,\Omega)},
\end{equation}
with a hidden constant independent of $h$. To accomplish this task, we first notice that, if $q_h \in \P_h$ then, for all $T \in \T$, $\nabla q_{h|T} \in \mathbb{R}^d$. Consequently, an integration by parts argument reveals that $\mathcal{F}_h \bv$ must be such that
\begin{equation}
\label{eq:Fortin_aux_2}
\int_{T} \bv \diff x = \int_{T} \mathcal{F}_h \bv\diff x  \quad \forall T \in \T.
\end{equation}

Let $\Pi_h$ denote the quasi--interpolation operator introduced in Section~\ref{subsec:quasi}. We define
\[
 \mathcal{F}_h \bv = \Pi_h \bv + \sum_{T \in \T} \sum_{i=1}^d \gamma_T^{i} \mathbf{e}_i b_T.
\]
Here, $\{\mathbf{e}_1, \cdots, \mathbf{e}_d \}$ denotes the canonical basis of $\mathbb{R}^d$, $\gamma_T^{i} \in \R$; $i \in \{1,\dots,d\}$, and $b_T$ is the bubble function associated with $T$. We thus have that the discrete function $\mathcal{F}_h\bv$ satisfies \eqref{eq:Fortin_aux_2} if 
\[
\gamma_T^{i} = \frac{\int_T (\bv - \Pi_h \bv)\diff x}{\int_T b_T \diff x}, \quad i \in \{ 1,\cdots,d\}, \quad T\in\T.
\]

It thus remains to prove the stability bound $\| \nabla \mathcal{F}_h \bv \|_{\mathbf{L}^p(\omega,\Omega)} \lesssim \|  \nabla \bv \|_{\mathbf{L}^p(\omega,\Omega)}$. Write
\[
 \| \nabla \mathcal{F}_h \bv \|_{\mathbf{L}^p(\omega,\Omega)} \leq  \| \nabla \Pi_h \bv \|_{\mathbf{L}^p(\omega,\Omega)} +  \left\| \nabla \left( \sum_{T \in \T} \sum_{i=1}^d \gamma_T^{i} \mathbf{e}_i b_T \right) \right \|_{\mathbf{L}^p(\omega,\Omega)} = \textrm{I} + \textrm{II},
\]
and notice that the local stability estimate \eqref{eq:local_stability} and the finite overlapping property of stars yield
\[
  \mathrm{I} = \| \nabla \Pi_h \bv \|_{\mathbf{L}^p(\omega,\Omega)}\lesssim \|  \nabla \bv \|_{\mathbf{L}^p(\omega,\Omega)}.
\]
To bound $\mathrm{II}$ we use the interpolation estimate \eqref{eq:interpolation_estimate_1} and properties of the bubble function to obtain
\begin{align*}
  | \gamma_T^i | & \lesssim |T|^{-1} h_T \| \nabla \bv\|_{\bL^{p}(\omega,\mathcal{S}_T)} \left(\int_T \omega' \diff x \right)^{\frac{1}{p'}} \\
    &\lesssim h_T^{1-d+d/p'}\| \nabla \bv \|_{\bL^{p}(\omega,\mathcal{S}_T)} \left(\fint_T \omega' \diff x \right)^{\frac{1}{p'}}.
\end{align*}
Consequently,
\begin{align*}
 \textrm{II} 
 & \lesssim \sum_{T \in \T} \sum_{i=1}^{d} |\gamma_T^i| \| \nabla b_T \|_{\bL^{p}(\omega,T)}
 \\
 & \lesssim \sum_{T \in \T} h_T^{1-d+d/p'}\| \nabla \bv\|_{\bL^{p}(\omega,\mathcal{S}_T)} \left(\fint_T \omega' \diff x \right)^{\frac{1}{p'}} h_T^{\frac{d}{p}-1}\left(\fint_T \omega \diff x \ \right)^{\frac{1}{p}}.
\end{align*}
Since $(1-d+d/p') + d/p - 1 = 0$ shape regularity allows us to conclude that
\[
 \textrm{II} \lesssim \sum_{T \in \T} \| \nabla \bv\|_{\bL^{p}(\omega,\mathcal{S}_T)} \left[ \left(\fint_T \omega \diff x \right)  \left( \fint_T \omega' \diff x \right)^{p-1} \right]^{\frac{1}{p}} \lesssim [\omega]^{\frac{1}{p}}_{A_p} \| \nabla \bv\|_{\bL^{p}(\omega,\Omega)},
\]
where we have used \eqref{A_pclass} and the finite overlapping property of stars. The collection of the derived estimates for $\mathrm{I}$ and $\mathrm{II}$ yield
\[
  \| \nabla \mathcal{F}_h \bv \|_{\mathbf{L}^p(\omega,\Omega)}  \lesssim (1 + [\omega]_{A_p}^{\frac{1}{p}}) \| \nabla \bv\|_{\bL^{p}(\omega,\Omega)}.
\]
The Fortin operator is thus constructed and this concludes the proof.
\end{proof}

\subsection{The Taylor Hood pair}
\label{sub:THpair}

The lowest order Taylor Hood element \cite{hood1974navier}, \cite{MR993474}, \cite[Section 4.2.5]{MR2050138} is defined by
\begin{align}
\label{eq:th_V}
\V_h & = \left\{  \bv_h \in \mathbf{C}(\bar \Omega): \ \forall T \in \T, \bv_h|_{T} \in \mathbb{P}_2(T)^d \right\} \cap \bH_0^1(\Omega),
\\
\label{eq:th_P}
\P_h & = \left\{ q_h \in L^2(\Omega)/\R \cap C(\bar \Omega):\  \forall T \in \T, q_h|_{T} \in \mathbb{P}_1(T) \right\}.
\end{align}
In two dimensions, estimate \eqref{eq:bG-bGT} for this pair is also obtained in \cite[Theorem 12]{MR3422453} and \cite[Theorem 8.1]{MR2121575}. In three dimensions, these references only show this result for certain classes of meshes.
%\AJSc{Check if \cite{MR3411280} has it in 3d}. 
As a consequence, we will focus on \eqref{eq:infsup_discrete}. Notice that, as in the unweighted case, the technique of proof must differ from that used in Section~\ref{sub:minipair}. We will follow the ideas of \cite[Section 3]{MR743884}; see also \cite[Section 4.2.5]{MR2050138}.

We begin with a preparatory step.

\begin{lemma}[perturbation]
\label{lem:perturbation}
Let $p \in (1,\infty)$ and $\omega \in A_p$. Assume that all $\{\T\}_{h>0}$ are such that every $T \in \T$ has at least $d$ edges in $\Omega$, and that $\V_h$ and $\P_h$ are defined as in \eqref{eq:th_V} and \eqref{eq:th_P}, respectively. Then we have that
\[
  h \| \nabla q_h \|_{L^{p'}(\omega',\Omega)} \lesssim \sup_{ \bv_h \in \V_h } \frac{ b(\bv_h,q_h) }{ \| \nabla \bv_h \|_{\bL^p(\omega,\Omega)}  } \quad \forall q_h \in \P_h,
\]
where the hidden constant does not depend on $h$.
\end{lemma}
\begin{proof}
We denote by $\Edges_h, \Vertices_h$, and $\Midpoints_h$ be the sets of interior edges, interior vertices, and interior edge midpoints, respectively, of $\T$. Let $\texte \in \Edges_h$ and we set $\btau_\texte$ to be a unit vector in the direction of $\texte$. Notice that there is a bijection between $\Edges_h$ and $\Midpoints_h$.

For $q_h \in \P_h$ we define $\bw_h \in \V_h$ as
\[
  \bw_h(\textv) = 0, \quad \forall \textv \in \Vertices_h,
\]
and
\[
  \bw_h(\textm) = - |\texte|^{p'} \btau_\texte \signum(\partial_{\btau_\texte} q_h )|\partial_{\btau_\texte} q_h|^{p'-1} \frac{\omega'(T)}{|T|}, \quad \forall \textm \in \Midpoints_h.
\]
Let $\{\phi_\textm\}_{\textm \in \Midpoints_h} \cup \{ \phi_\textv \}_{\textv \in \Vertices_h}$ be the Lagrange nodal basis for piecewise quadratics over $\T$. Upon expanding $\bw_h$ on this basis we realize that
\begin{align*}
  \| \nabla \bw_h \|_{\bL^p(\omega, \Omega)}^p &= \sum_{T \in \T} \int_T \omega \left|\sum_{\textm \in \Midpoints_h : \textm \in T} \bw_h(\textm) \nabla \phi_\textm \right|^p \diff x \\
    &\lesssim h^{p'} \sum_{T \in \T} \frac{ \omega(T) \left[\omega'(T)\right]^p}{|T|^p} \sum_{\textm \in \Midpoints_h : \textm \in T} |\partial_{\btau_\texte} q_h|^{p'} \\
    & \lesssim h^{p'} [\omega]_{A_p} \sum_{T \in \T} \omega'(T) |\nabla q_h|^{p'} \lesssim h^{p'} \| \nabla q_h\|_{\bL^{p'}(\omega',\Omega)}^{p'}.
\end{align*}

Recall now (see \cite[Tables 8.2 and 8.3]{MR2050138}) that for $d \in \{2,3\}$ there is a quadrature formula on the unit simplex which is exact for quadratics, it is supported on the vertices and edge midpoints of the simplex, and has positive weights on the midpoints. Let $\{\varrho_\textm\}$ be the weights of this formula, then we have that
\begin{align*}
  b(\bw_h,q_h) &= - \sum_{T \in \T} \nabla q_h \cdot \int_T \bw_h \diff x \\
    &= \sum_{T \in \T} \omega'(T) \nabla q_h \cdot \sum_{\textm \in \Midpoints_h : \textm \in T} \varrho_m\btau_{\texte}  |e|^{p'} \signum(\partial_{\btau_\texte} q_h )|\partial_{\btau_\texte} q_h|^{p'-1} \\
    &\gtrsim h^{p'} \sum_{T \in \T} \omega'(T) \sum_{\texte \in \Edges_h: \texte \subset T} |\partial_{\btau_\texte} q_h|^{p'} 
    \gtrsim h^{p'} \sum_{T \in \T} \omega'(T) |\nabla q_h|^{p'},
\end{align*}
where, in the last step, we used that the mesh assumption implies that for any element $T$ the collection $\{\btau_\texte\}_{\texte \in \Edges_h: \texte \subset T}$ spans $\R^d$. Conclude by recalling that $\nabla q_h$ is constant over $T$.
\end{proof}

With this result at hand we now prove \eqref{eq:infsup_discrete} for the Taylor Hood pair.

\begin{theorem}[discrete inf--sup condition]
In the setting of Lemma~\ref{lem:perturbation}, we have
\begin{equation}
\label{eq:infsup_discretebTH}
  \| q_h \|_{L^{p'}(\omega',\Omega)} \lesssim \sup_{ \bv_h \in \V_h } \frac{ b(\bv_h, q_h) }{ \| \nabla \bv_h \|_{\bL^p(\omega,\Omega)}  }  \quad \forall q_h \in \P_h,
\end{equation}
where the hidden constant is independent of $h$.
\label{lm:infsup_discretebTH}
\end{theorem}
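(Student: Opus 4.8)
The plan is to follow the classical Verf\"urth--Boland--Nicolaides strategy (as in \cite[Section 3]{MR743884} and \cite[Section 4.2.5]{MR2050138}), combining the \emph{macroscopic} control of $\| q_h \|_{L^{p'}(\omega',\Omega)}$ furnished by the continuous weighted inf--sup condition \eqref{eq:InfSupContinuous} with the \emph{local} control of $h\|\nabla q_h\|_{L^{p'}(\omega',\Omega)}$ furnished by the perturbation Lemma~\ref{lem:perturbation}. The coupling device is the quasi--interpolation operator $\Pi_h$ of Section~\ref{subsec:quasi}, which is stable in the weighted gradient norm and whose consistency error in the divergence pairing only sees $\nabla q_h$.

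Fix $q_h \in \P_h$. First I would invoke \eqref{eq:InfSupContinuous} to produce $\bv \in \bW^{1,p}_0(\omega,\Omega)$, normalized so that $\| \nabla \bv \|_{\bL^p(\omega,\Omega)} = 1$, with $b(\bv,q_h) \gtrsim \| q_h \|_{L^{p'}(\omega',\Omega)}$. Since for the Taylor--Hood pair the piecewise linear space satisfies $\bX_h \subset \V_h$, the interpolant $\Pi_h \bv$ lies in $\V_h$, and the local stability bound \eqref{eq:local_stability} together with the finite overlap of the stars $\mathcal{S}_T$ gives $\| \nabla \Pi_h \bv \|_{\bL^p(\omega,\Omega)} \lesssim 1$. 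I then split $b(\Pi_h \bv, q_h) = b(\bv,q_h) + b(\Pi_h \bv - \bv, q_h)$ and estimate the second, consistency, term.

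The consistency term is where the argument turns. Because $q_h \in \P_h$ is globally $H^1$ (continuous piecewise linear) and both $\bv$ and $\Pi_h\bv$ belong to $\bW^{1,p}_0$, an integration by parts with no boundary contribution yields $b(\Pi_h \bv - \bv, q_h) = \int_\Omega (\Pi_h \bv - \bv)\cdot \nabla q_h \diff x$. On each $T$ I apply the weighted H\"older inequality, splitting the integrand as $\omega^{1/p}(\Pi_h\bv-\bv)\cdot \omega^{-1/p}\nabla q_h$; the key algebraic check is that $\omega^{-p'/p} = \omega^{-1/(p-1)} = \omega'$, so the factor involving $\nabla q_h$ is measured in $L^{p'}(\omega',T)$. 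Combining this with the interpolation estimate \eqref{eq:interpolation_estimate_1}, a discrete H\"older inequality over the elements, and finite overlap gives $|b(\Pi_h \bv - \bv, q_h)| \lesssim h\,\| \nabla q_h \|_{L^{p'}(\omega',\Omega)}$. Writing $S := \sup_{\bv_h \in \V_h} b(\bv_h,q_h)/\| \nabla \bv_h \|_{\bL^p(\omega,\Omega)}$ and testing with $\bv_h = \Pi_h\bv$ then yields $S \gtrsim \| q_h \|_{L^{p'}(\omega',\Omega)} - C\,h\,\| \nabla q_h \|_{L^{p'}(\omega',\Omega)}$.

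To finish I would absorb the gradient term: Lemma~\ref{lem:perturbation} gives $h\,\| \nabla q_h \|_{L^{p'}(\omega',\Omega)} \lesssim S$, so the previous inequality becomes $S \gtrsim \| q_h \|_{L^{p'}(\omega',\Omega)} - c\,S$ for a constant $c$ independent of $h$, whence $S \gtrsim \| q_h \|_{L^{p'}(\omega',\Omega)}$, which is exactly \eqref{eq:infsup_discretebTH}. I expect the main obstacle to be the correct bookkeeping of the weighted H\"older duality in the consistency estimate—verifying that the weight conjugation produces precisely $\omega'$ and that all constants depend on $[\omega]_{A_p}$ but not on $h$—together with confirming that the integration by parts is globally valid (no interelement jump terms) thanks to the continuity of $q_h$ and the homogeneous boundary condition on the velocity; the absorption step itself is then purely algebraic.
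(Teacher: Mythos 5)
Your proposal is correct and follows essentially the same route as the paper's proof: both test the discrete inf--sup with the quasi-interpolant $\Pi_h$ of a (near-)optimal field supplied by the continuous weighted inf--sup condition \eqref{eq:InfSupContinuous}, bound the consistency term $b(\Pi_h \bv - \bv, q_h)$ by $h \| \nabla q_h \|_{L^{p'}(\omega',\Omega)}$ via integration by parts, weighted H\"older, and the interpolation estimate \eqref{eq:interpolation_estimate_1}, and then absorb that term using Lemma~\ref{lem:perturbation}. The only cosmetic difference is that the paper works with the concrete field $\bw_{q_h}$ constructed in the proof of \eqref{eq:InfSupContinuous} instead of an abstract normalized near-maximizer.
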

\begin{proof}
Given $q_h \in \P_h \subset L^{p'}(\omega',\Omega)/\R$, let $\bw_{q_h} \in \bW^{1,p}_0(\omega,\Omega)$ be the function constructed in the course of the proof of \eqref{eq:InfSupContinuous} and $\Pi_h$ the interpolant, onto $\V_h$, described in Section~\ref{subsec:quasi}. The properties of $\Pi_h$ and arguing as in the proof of \eqref{eq:InfSupContinuous} show that
\begin{align*}
  \sup_{ \bv_h \in \V_h } \frac{ b(\bv_h, q_h)}{ \| \nabla \bv_h \|_{\bL^p(\omega,\Omega)}  }  &\geq 
    \frac{ b(\Pi_h \bw_{q_h} , q_h) }{ \| \nabla \Pi_h \bw_{q_h} \|_{\bL^p(\omega,\Omega)}  } \\
    &\gtrsim \| q_h \|_{L^{p'}(\omega',\Omega)} +
    \frac{b\left( \Pi_h \bw_{q_h} - \bw_{q_h}, q_h \right)}{ \| \nabla \bw_{q_h} \|_{\bL^p(\omega,\Omega)}  }.
\end{align*}

Integration by parts, and the properties of $\Pi_h$ show that
\begin{align*}
  \frac{b\left( \Pi_h \bw_{q_h} - \bw_{q_h}, q_h \right)}{ \| \nabla \bw_{q_h} \|_{\bL^p(\omega,\Omega)}  } &\geq
  - \frac{ \| \nabla q_h \|_{L^{p'}(\omega',\Omega)} \| \bw_{q_h} - \Pi_h \bw_{q_h} \|_{\bL^p(\omega,\Omega)}  }{ \| \nabla \bw_{q_h} \|_{\bL^p(\omega,\Omega)}  } \\
  &\gtrsim -h \| \nabla q_h \|_{L^{p'}(\omega',\Omega)}.
\end{align*}
Lemma~\ref{lem:perturbation} allows us to conclude.
\end{proof}

% \bibliographystyle{amsplain}
% \bibliography{biblio}

\providecommand{\bysame}{\leavevmode\hbox to3em{\hrulefill}\thinspace}
\providecommand{\MR}{\relax\ifhmode\unskip\space\fi MR }
% \MRhref is called by the amsart/book/proc definition of \MR.
\providecommand{\MRhref}[2]{%
  \href{http://www.ams.org/mathscinet-getitem?mr=#1}{#2}
}
\providecommand{\href}[2]{#2}

\end{document}